\date{October  7, 2019}
\newtheorem{theorem}{Theorem}[section]
\newtheorem{lemma}[theorem]{Lemma}
\newtheorem{proposition}[theorem]{Proposition}
\newtheorem{corollary}[theorem]{Corollary}
\newtheorem{definition}[theorem]{Definition}
\theoremstyle{remark}
\newcommand{\Res}{\text{Res}}
\newcommand{\cO}{{\mathcal O}}
\newcommand{\CC}{{\mathbb C}}
\newcommand{\NN}{{\mathbb N}}
\newcommand{\RR}{{\mathbb R}}
\newcommand{\ZZ}{{\mathbb Z}}
\renewcommand{\Re}{\operatorname{Re}}
\renewcommand{\Im}{\operatorname{Im}}
\renewcommand{\a}{\alpha}
\newcommand{\g}{\gamma}
\title{Difference equations and Omega functions}
\subjclass[2010]{Primary: 30D10, 39A05. Secondary: 30D15, 30B50}
\keywords{Euler Gamma function, exponential periods, Omega functions, difference equation}
\author[R. P\'{e}rez-Marco]{Ricardo P\'{e}rez-Marco}
\address{CNRS, IMJ-PRG, Universit\'e Paris Cit\'e, Paris, France}
\email{ricardo.perez.marco@gmail.com}
\begin{document}

\maketitle

\begin{abstract}
We introduce Omega functions that generalize Euler Gamma functions and study the functional difference equation 
they satisfy. Under a natural exponential growth condition, the vector space of meromorphic solutions of the 
functional equation is finite dimensional. We construct a basis of the space of solutions composed by Omega functions. 
Omega functions are defined as exponential periods. They have a meromorphic
extension to the complex plane of order $1$ with simple poles at negative integers. Thevector space they span is 
characterized by their functional equation and 
their growth property on vertical strips. This generalizes Wielandt's 
characterization of Euler Gamma function. We also introduce Incomplete Omega functions that play an important 
role in the proofs.
\end{abstract}

\section{Introduction}

\subsection{Difference equations.}
We study in this article difference equations of the form
\begin{equation}\label{eq:functional1}
sf(s)=\sum_{k=1}^d \alpha_k f(s+k) 
\end{equation}
where $\a_1,\ldots , \a_d \in \CC$ and $\a_d\not=0$. The simplest case is 
the functional equation satisfied by Euler Gamma function
$$
s\Gamma(s) = \Gamma(s+1) 
$$

These equations are linear and we have a vector space of meromorphic solutions.
A natural motivation for studying these functional equations comes from the study of subspaces generated 
by natural linear operators. For instance, we can consider, in the space of meromorphic functions,
the shift (or integer translation) linear operator
$$
T(f(s))=f(s+1)
$$
and the multiplication by $s$ linear operator
$$
S(f(s))=sf(s) 
$$
Observe that $S$ has no eigenvectors and  
the minimal invariant subspace invariant by $S$ containing the constant functions is the space of polynomials $\CC[s]$. 
The space generated by $S$ and the function $f$ is the vector space $\CC[s] f$
$$
\langle f, S(f), S^2(f), \ldots \rangle = \CC[s] f
$$
It is natural to find the functions $f$ such that the space $\CC[s]f$ is generated by $f$ and $T$. 
This happens if and only if $f$ is a solution of the difference equation (\ref{eq:functional1}).

Already, in the simplest case of the functional equation of Euler Gamma function,
the space of solutions is infinite dimensional since any function of the form $e^{2\pi i n s}\Gamma(s)$ for 
an integer $n\in \ZZ$ is also a solution. 
It is classical to add conditions to characterize Euler Gamma function as the only 
normalized solution to this functional equation. One can mention Weierstrass characterization imposing some asymptotic 
behavior when $s\to +\infty$ (1856, \cite{Wei}), or Wielandt's 
characterization (1939, \cite{Wie}, see also \cite{Rem1}, \cite{Rem2}) 
requiring boundedness on vertical strips of width larger than $1$, or, more recently, requiring finite 
order of  the solutions and a right half plane free of zeroes nor poles (2022, \cite{PM1}). Wielandt's boundedness
condition has been weakened by Fuglede to a moderate growth in the vertical strip (2008, \cite{Fu}).

In the spirit of Wielandt, we search for solutions with some growth control on vertical strips. 
Under a suitable growth condition, we prove that the space of solutions is finite dimensional:

\begin{theorem}
The space of meromorphic solutions $f$ of the functional equation 
\begin{equation*}
sf(s)=\sum_{k=1}^d \a_k f(s+k) 
\end{equation*}
where $\a_1,\ldots , \a_d \in \CC$, $\a_d\not= 0$, and $f$ satisfies a growth condition, for $1\leq \Re s\leq d$,
$$
|f(s)|\leq Ce^{-c\Im s}
$$
for some constant $C>0$ and $0\leq c<2\pi$, is finite dimensional of dimension $d$.
\end{theorem}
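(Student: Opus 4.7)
The plan is to prove the theorem in two parts: construct a $d$-dimensional space of solutions (lower bound on dimension), and show that any solution with the given growth property must lie in this space (upper bound).

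\textbf{Lower bound via exponential periods.} The strategy is to exhibit $d$ explicit meromorphic solutions as contour integrals of the form
$$
\Omega_j(s) \;=\; \int_{\gamma_j} e^{-\varphi(z)}\, z^{s-1}\,\dd z,
\qquad
\varphi(z) := \sum_{k=1}^d \frac{\a_k}{k}\, z^k .
$$
Formally, integration by parts against $-\varphi'(z)=-\sum_k \a_k z^{k-1}$ converts the difference equation into a boundary-term condition: provided $e^{-\varphi(z)} z^s$ vanishes at both endpoints of $\gamma_j$, the integral $\Omega_j$ satisfies $s\,\Omega_j(s)=\sum_{k=1}^d \a_k \Omega_j(s+k)$. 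Now $e^{-\varphi(z)}$ has exactly $d$ Stokes sectors at infinity in which it decays super-exponentially; I would take $\gamma_1,\ldots,\gamma_d$ to be contours from $0$ to $\infty$ running, respectively, through each of these $d$ sectors. A steepest-descent analysis on vertical lines should both verify the growth bound $|\Omega_j(s)|\leq C e^{-c\Im s}$ with $c<2\pi$ and exhibit $d$ distinct asymptotic profiles, thereby yielding linear independence.

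\textbf{Upper bound.} Let $f$ be any meromorphic solution satisfying the growth condition. The idea is to peel off one Stokes contribution at a time. The asymptotic behaviour of $f$ on the strip $1\le \Re s\le d$ as $|\Im s|\to\infty$ should isolate $d$ Stokes coefficients $c_1,\ldots,c_d\in\CC$ measuring the contribution of each sector. Setting $g:=f-\sum_{j=1}^d c_j \Omega_j$, the function $g$ is another meromorphic solution of the same functional equation, but now with \emph{strictly stronger} decay than any $e^{-c\Im s}$ with $c<2\pi$. I would then argue that such a ``small'' solution must vanish identically: the difference equation propagates the strip decay to all of $\CC$, and after clearing the poles (say, by multiplying by $\prod_{j\ge 0}(s+j)$ truncated suitably), a Carlson / Phragm\'en--Lindel\"of argument on the strip forces the resulting auxiliary entire function to be zero. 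Hence $f=\sum_j c_j \Omega_j$ and the solution space has dimension exactly $d$.

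\textbf{Main obstacle.} The decisive step is the upper bound, and its difficulty lies in exploiting the sharpness of the constant $2\pi$. This threshold is what excludes the parasitic family $\{e^{2\pi i n s}\Omega_j : n\in\ZZ\smallsetminus\{0\}\}$, since $|e^{2\pi i n s}|=e^{-2\pi n \Im s}$. Turning this heuristic into a clean rigorous statement requires (i) a careful definition of the Stokes coefficients $c_j$ extracted from the vertical-strip asymptotics of $f$, so that the residual $g$ really does improve past the critical rate, and (ii) a Carlson-type uniqueness theorem for meromorphic solutions of the difference equation, valid in the presence of poles at negative integers. The growth hypothesis is only assumed on a \emph{single} fundamental strip, so a secondary technical point is to bootstrap this local bound, via the functional equation itself, to enough of $\CC$ to apply the Phragm\'en--Lindel\"of principle.
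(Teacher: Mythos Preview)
Your construction of the $\Omega_j$ as exponential periods along rays into the $d$ decay sectors is exactly what the paper does, so the lower-bound half is on the right track.

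Where your plan diverges from the paper---and where the real gap lies---is the upper bound. You propose to read off ``Stokes coefficients'' $c_1,\dots,c_d$ from the vertical-strip asymptotics of an arbitrary solution $f$, subtract $\sum c_j\Omega_j$, and show the residual has decay strictly better than $e^{-c\Im s}$ for any $c<2\pi$. The difficulty is that the hypothesis gives only a crude upper bound $|f(s)|\le Ce^{-c\Im s}$, not an asymptotic expansion; there is no evident mechanism to \emph{define} the $c_j$ from this information alone, and proving that every solution in $\mathbb V$ admits a full Stokes-type expansion would itself be a substantial theorem. Similarly, your pole-clearing idea (``multiply by $\prod_{j\ge 0}(s+j)$ truncated suitably'') does not work as stated, since the poles run over all nonpositive integers.

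The paper sidesteps both problems with a cleaner device. Rather than matching asymptotics, it matches \emph{values at finitely many integers}: one chooses $c_0,\dots,c_{d-1}$ so that $g=\sum c_k\Omega_k$ agrees with $f$ at $s=1,2,\dots,d$. This is possible precisely because the period matrix $[\Omega_k(l)]_{0\le k\le d-1,\,1\le l\le d}$ is invertible---and establishing that invertibility (via an explicit computation of its determinant, reducing by a logarithmic-derivative argument to the Fermat case $P_0=-t^d/d$ and then invoking Gauss multiplication and a Vandermonde) is the technical heart of the paper. Once $f$ and $g$ agree at $d$ consecutive integers, the functional equation forces agreement at \emph{all} positive integers. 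Since $f-g$ is holomorphic on $\{\Re s\ge 1\}$ (the bound on the strip precludes poles there), one then sets $h(s)=e^{-i\pi s}(f(s)-g(s))/\Gamma(s/d)$; the factor $e^{-i\pi s}$ recentres the one-sided bound $e^{-c\Im s}$, $c\in[0,2\pi)$, into a two-sided bound $e^{c'|\Im s|}$ with $c'<\pi$, and dividing by $\Gamma(s/d)$ controls growth to the right (using a bootstrap of the strip bound via the functional equation). Carlson's theorem then gives $h\equiv 0$.

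So the missing idea is: replace Stokes-coefficient matching by integer-value matching, and recognise that the same determinant calculation that yields linear independence of the $\Omega_k$ also supplies the invertibility needed for that matching.
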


Moreover, we build an explicit basis of the vector space of solutions with  
Special Functions, that we call Omega functions, that generalize Euler Gamma function.  
There is a large classical literature on 
linear difference equations with polynomial coefficients by Poincar\'e \cite{Poi}, Birkhoff \cite{Bi}, 
Carmichael \cite{Ca}, N\"orlund \cite{No}, and, more recently, 
solutions with vertical exponential growth have been studied by Barkatou \cite{Ba} and 
Duval \cite{Du} following work of Ramis. Even as early as 1739, Euler studied these difference equations with linear 
coefficients in \cite{[E123]}. The analysis 
of the functional equation in this article is self-contained and independent of the classical theory.


\subsection{Omega functions.}
Historically, Euler Gamma function appears for the first time in a letter from Euler  
to  Goldbach, dated January 8th 1730 (\cite{[E00717]}).  
Euler defines the Gamma function for real values $s>0$, by the 
integral formula
$$
\Gamma (s) =\int_0^{+\infty} t^{s-1} e^{-t} dt \ .
$$
which is also convergent for complex values of $s$ with $\Re s >0$. 
In this integral formula, the value $\Gamma(s)$ appears as an \textit{exponential period}. 

\textit{Algebraic periods} are integrals of algebraic differential forms over cycles of an algebraic 
variety. In the special case of an algebraic curve, when we represent the curve as a Riemann domain over 
the complex plane or the Riemann sphere, algebraic periods are also the integrals of algebraic differential 
forms on paths joining two ramification points where we have singularities of the differential form. 
From the transalgebraic point of view, it is natural to consider exponential periods, 
where integrals involve exponential expressions, and the singularities can be exponential singularities. 
More general periods can be envisioned where the differential form has transcendental singularities with 
monodromy like $t^s$ in a local variable (geometrically these correspond to differential forms living in a
branched Riemann domain with an infinite ramification). There is a vast literature on classical algebraic 
periods, but almost none on the transalgebraic periods.
We refer to \cite{KZ} for a survey about classical periods, and to \cite{BPM1} and \cite{BPM3} for exponential periods and 
their relation with log-Riemann surfaces. 
Also we refer the reader to \cite{PM1}  for a historical
survey of different definitions of Euler Gamma function and their generalizations, and to \cite{WW} for 
its classical properties.

We introduce (resp. Incomplete) Omega functions which are a 
natural generalization of the (resp. Incomplete) Gamma function. They are defined as:
$$
\Omega_k(s)=\int_0^{+\infty.\omega_k} t^{s-1} e^{P_0(t)} \, dt \ \ , \ \   \Omega_k(s, z)=\int_0^{z} t^{s-1} e^{P_0(t)} \, dt
$$
where $P_0(t)\in \CC[t]$ and $\omega_k$ is a root of unity pointing to a direction where the polynomial $P_0$ diverges to $-\infty$.
Some critical computations in the proof of the main Theorem generalize computations 
carried out for exponential periods appearing in \cite{BPM3}. 
The generalization of the Ramificant Determinant 
is the key result for the  proof of the linear 
independence of the Omega functions $(\Omega_k)$. In this magical calculation, we  compute 
a determinant of a matrix of exponential periods which are individually not computable.  
Omega functions appeared before in the literature
under the name of ``modified Gamma functions'' and their asymptotic behavior at infinite
was studied by N. G. De Bruijn, \cite{Bru} p.119, and A. Duval \cite{Du}. A. Aycock explained to me that
he also derived Omega functions from an old method by Euler to solve this type of functional 
equations (see \cite{[E123]} and \cite{Ayc3}). We know of no other earlier references
for Omega functions.

\section{Definition.}

Let $P_0(t)\in \CC[t]$ be a degree $d\geq 1$ polynomial such that $P_0(0)=0$ and 
$\lim_{t \to +\infty} \Re P_0(t) =-\infty$,  normalized such that
$$
P_0(t) = -\frac{1}{d} t^d +\sum_{k=1}^{d-1} a_k t^k 
$$
We also denote $a_d=-1/d$ and $a_0=0$. Let $\omega$ be the primitive $d$-th root of unity given by
$\omega =e^{\frac{2\pi i}{d}} $ and write $\omega_k=\omega^k$.

\begin{definition}
Let $d\geq 1$. For $k=0,1,\ldots , d-1$, the Omega functions, or $\Omega$-functions, associated 
to $P_0$, are defined for $\Re s >0$, by
$$
\Omega_k(s) =\int_0^{+\infty . \omega_k} t^{s-1} e^{P_0(t)} \, dt 
$$ 
\end{definition}
The roots $\omega_k$ point to the directions where the polynomial $P_0$ diverges exponentially to $-\infty$,
$$
\lim_{t\to +\infty . \omega_k} \Re P_0( t) =-\infty 
$$
so the integral is converging and we have a sound definition. There are different branches of $t^s=e^{s\log t}$ 
that we can take in the integral. The 
value of the integral will differ by a power of $e^{2\pi i s}$. For $k=0, \ldots d-1$, we choose the branch of $\log t$ with argument in 
$[0, 2\pi[$, With this convention we have for $u>0$, $(\omega_k u)^s =\omega_k^s u^s$ and this is important for the estimates. 

Usually, we spare the reference to $P_0$, but for some results it will be crucial 
to keep track of the dependence on parameters and we will write
$$
\Omega(s)=\Omega(s|P_0)=\Omega(s|a_1,\ldots , a_{d-1}) \ .
$$
For $d=1$, we have $P_0(t)=-t$ and $\Omega_1=\Gamma$ is Euler Gamma function.

If $P_0\in \RR[t]$, then $\Omega_0$ is real analytic.

Sometimes we will be interested in the case where $(a_k)$ are in a number field $\mathbb K\subset \CC$. 
In this case we say that these Omega functions are defined over $\mathbb K$.

\bigskip
\bigskip

\section{Meromorphic extension, poles and residues.}

\begin{theorem}\label{thm:functional_eq}
The Omega functions $(\Omega_k)_{0\leq k\leq d-1}$ extend to the complex plane into meromorphic functions 
of order $1$ satisfying the fundamental functional equation
\begin{equation}\label{eq:functional_eq}
\Omega_k(s+d) +\alpha_{d-1}\Omega_k(s+d-1) + \ldots + \alpha_1 \Omega_k(s+1) = s\, \Omega_k(s) 
\end{equation}
where $\alpha_l =-la_l$.

Moreover, the function $\Omega_k$ is holomorphic in $\CC-\NN$, and has simple poles at the negative integers. The  residue at 
$s=0$ is
$$
\Res_{s=0} \, \Omega_k =1 \ .
$$
\end{theorem}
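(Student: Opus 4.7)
The plan is to derive the functional equation by integration by parts on the defining integral, realize $\Omega_k$ as meromorphic on $\CC$ by splitting the integral at $r=1$ and Taylor-expanding $e^{P_0}$, and control the growth of the resulting expression to establish order $1$. For $\Re s > 0$, the function $t^s e^{P_0(t)}$ vanishes at both ends of the ray $[0,+\infty\omega_k)$ (at $0$ because $\Re s>0$, at infinity because $\Re P_0\to-\infty$ along the ray), hence
$$
0=\int_0^{+\infty\omega_k}\frac{d}{dt}\bigl(t^s e^{P_0(t)}\bigr)\,dt=s\,\Omega_k(s)+\sum_{l=1}^{d} l\,a_l\,\Omega_k(s+l).
$$
Using $d a_d=-1$ and $\alpha_l=-l a_l$ this is precisely \eqref{eq:functional_eq} on the half-plane $\Re s>0$, and it propagates to $\CC$ by analytic continuation.

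For the extension itself, I parametrize $t=r\omega_k$ to write $\Omega_k(s)=\omega_k^s\int_0^\infty r^{s-1}e^{P_0(r\omega_k)}\,dr$ and split at $r=1$. The tail is entire in $s$ by the super-exponential decay of $e^{P_0}$ along the ray. Expanding $e^{P_0(t)}=\sum_{m\geq 0}c_m t^m$ and integrating termwise gives for the near-origin piece
$$
\omega_k^s\int_0^1 r^{s-1}\sum_{m\geq 0}c_m\omega_k^m r^m\,dr=\omega_k^s\sum_{m\geq 0}\frac{c_m\omega_k^m}{s+m},
$$
a Mittag--Leffler series which converges for $s$ outside $\{0,-1,-2,\ldots\}$ because the $c_m$ decay super-geometrically ($e^{P_0}$ being entire). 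This exhibits $\Omega_k$ as meromorphic on $\CC$ with simple poles exactly at the nonpositive integers, and the residue at $s=-m$ equals $\omega_k^{-m}\cdot c_m\omega_k^m=c_m$; in particular $\Res_{s=0}\Omega_k=c_0=1$.

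For meromorphic order $1$, I bound $\Omega_k$ on a right strip $a\leq\Re s\leq b$ with $a>0$: using $|\omega_k^s|=e^{-2\pi k\Im s/d}$ and $\Re P_0(r\omega_k)\leq -r^d/(2d)+O(1)$,
$$
|\Omega_k(s)|\leq e^{-2\pi k\Im s/d}\int_0^\infty r^{\Re s-1}e^{\Re P_0(r\omega_k)}\,dr,
$$
and a gamma-type estimate with Stirling gives $\log|\Omega_k(s)|=O(|s|\log|s|)$ on any right half-plane. Iterating the functional equation in the form $\Omega_k(s)=s^{-1}[\Omega_k(s+d)+\sum_l\alpha_l\Omega_k(s+l)]$ transfers this bound across any vertical strip at the cost of polynomial-in-$s$ factors, preserving the order-$1$ exponent; equivalently, $\Omega_k/\Gamma$ is entire (the zeros of $1/\Gamma$ cancel the simple poles of $\Omega_k$) and inherits the same estimate, so $\Omega_k=\Gamma\cdot(\Omega_k/\Gamma)$ is meromorphic of order $1$.

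The delicate step is the order-$1$ bound: the Laplace asymptotic in $\Re s$ must be combined with the control of the factor $\omega_k^s$ in $\Im s$, and then propagated into left half-planes through the functional equation without spoiling the growth exponent. The functional equation, the location of the poles, and the residue computation, by contrast, are essentially immediate once the Taylor expansion of $e^{P_0}$ at the origin is in hand.
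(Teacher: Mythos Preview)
Your argument is correct. The functional equation via integration by parts is exactly what the paper does. Where you diverge is in the meromorphic extension and residue computation: the paper extends $\Omega_k$ by \emph{iterating the functional equation} (from $\{\Re s>0\}$ to $\{\Re s>-1\}$, etc.), reads off the simplicity of the poles from that iteration, and computes $\Res_{s=0}\Omega_k$ by evaluating $\sum_l\alpha_l\Omega_k(l)=\int_0^{+\infty\omega_k}(-P_0'(t))e^{P_0(t)}\,dt=1$; the full residue sequence $(\lambda_n)$ is then obtained in a separate theorem via a recurrence and a generating-function identity $F'=P_0'F$. You instead go straight to the Mittag--Leffler decomposition (splitting the integral at $r=1$ and Taylor-expanding $e^{P_0}$), which the paper only carries out \emph{after} those two results. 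Your route is more economical---you get the extension, the pole locations, all residues, and a handle on the order in one stroke---while the paper's route keeps the roles of the functional equation and the power-series structure conceptually separate, and its generating-function computation of the residues is an attractive argument in its own right.

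Two small remarks. First, your change of variables $t=r\omega_k$ is unnecessary for the near-origin piece: writing $\int_0^1 t^{s-1}e^{P_0(t)}\,dt=\sum_m c_m/(s+m)$ directly (with the integration along the segment $[0,1]$, or equivalently $[0,\omega_k]$, which are homotopic away from $0$) avoids the $\omega_k^s\cdot\omega_k^m$ bookkeeping. Second, note that the paper's proof of this theorem does not itself establish the order~$1$ claim; that is deferred to the Mittag--Leffler theorem and its corollary (using a lemma bounding the tail integral by $e^{-2\pi k\Im s/d}(C_0+C_1 d^{\Re s/d}\Gamma(\Re s/d))$). Your sketch of the order bound---Gamma-type growth on right half-planes plus propagation leftward via the functional equation, or equivalently the observation that $\Omega_k/\Gamma$ is entire of order~$1$---is along the same lines and is adequate, though the left-half-plane propagation deserves one more sentence of care (each application of $\Omega_k(s)=s^{-1}[\cdots]$ shifts the strip by $1$ and multiplies by $O(|s|^{-1})$, so after $O(|\Re s|)$ steps the accumulated factor is still $e^{O(|s|\log|s|)}$).
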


Observe that for $d=1$, $\Omega_0=\Gamma$  and  
the functional equation (\ref{eq:functional_eq}) is  the classical functional 
equation $\Gamma (s+1)=s\Gamma (s)$. The general case of the functional equation when $\alpha_d\not= 1, 0$ 
is treated in the same way without the normalization $a_d=-1/d$.

\begin{proof}[\textbf{Proof.}]
For $\Re s >0$, we have by integration by parts,
\begin{align*}
 \Omega_k(s+d) +\sum_{l=1}^{d-1}\alpha_{l}\Omega_k(s+l) 
 &= \int_0^{+\infty . \omega_k} t^s(-P'_0(t)).e^{P_0(t)}\, dt\\ 
 &= \left [ -t^se^{P_0(t)}\right ]_0^{+\infty . \omega_k} +s \int_0^{+\infty . \omega_k} t^{s-1} e^{P_0(t)}\, dt\\
 &= s\, \Omega_k(s)
\end{align*}
and we get the functional equation (\ref{eq:functional_eq}). Now, using once the functional 
equation, we extend meromorphically $\Omega_k$ to $\{\Re s >-1\}$,
and by induction to $\{\Re s >-n\}$, for $n=1,2,\ldots$, hence to all of $\CC$. 
The only poles that can be introduced by this 
extension procedure using the functional equation are those created from the pole 
at $s=0$ and are at the negative integers. 
The functional equation
shows that $s \Omega_k(s)$ is holomorphic at $s=0$, hence the pole at $s=0$ is simple. It follows from the functional equation 
and the extension procedure that the other poles are also simple.
We compute the residue at $s=0$ using the functional equation,
$$
\Res_{s=0} \,  \Omega_k = \lim_{s\to 0} s \Omega_k(s)=\sum_{l=1}^d \alpha_l \Omega_k(l) = 
\int_0^{+\infty . \omega_k} (-P'_0(t)).e^{P_0(t)}\, dt = \left [ -e^{P_0(t)}\right ]_0^{+\infty . \omega_k}
=1
$$
\end{proof}
More generally, we can compute the residues at all the negative integers.

\begin{theorem}\label{thm:residues}
 Let  $(\lambda_n)_{n\geq 0}$ be the coefficients of the power series expansion of $e^{P_0(t)}$,
 $$
 e^{P_0(t)} = \sum_{n=0}^{+\infty } \lambda_n t^n \ .
 $$
 Then the residue of $\Omega_k$ at $s=-n$ is $\lambda_n$,
 $$
 \Res_{s=-n} \,  \Omega_k  = \lambda_n \ .
 $$
\end{theorem}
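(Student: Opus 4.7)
The plan is to bypass any iterated use of the functional equation and read off the residues directly from a local expansion of $t^{s-1}e^{P_0(t)}$ near the origin. All singularities come from the behavior of the integrand at $t=0$, so I would fix some small $\epsilon>0$ and split the integration path along the ray of direction $\omega_k$ at the point $\epsilon\omega_k$:
\begin{equation*}
\Omega_k(s) \;=\; \int_0^{\epsilon\omega_k} t^{s-1} e^{P_0(t)} \, dt \;+\; \int_{\epsilon\omega_k}^{+\infty \cdot \omega_k} t^{s-1} e^{P_0(t)} \, dt.
\end{equation*}
The second piece is entire in $s$, since the integration path stays away from the origin and $\Re P_0(t)\to -\infty$ along the ray $\omega_k$, so there is nothing to analyse there.

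On the compact segment $[0,\epsilon\omega_k]$, the entire function $e^{P_0(t)}=\sum_{n\geq 0}\lambda_n t^n$ is the uniform limit of its Taylor polynomials. For $\Re s>0$ the singularity at $t=0$ is integrable, so termwise integration is justified and gives
\begin{equation*}
\int_0^{\epsilon\omega_k} t^{s-1} e^{P_0(t)} \, dt \;=\; \sum_{n\geq 0} \lambda_n \, \frac{(\epsilon\omega_k)^{s+n}}{s+n}.
\end{equation*}
Because $e^{P_0}$ is entire of finite order $d$, the coefficients $\lambda_n$ decay super-geometrically; combined with the elementary bound $|(\epsilon\omega_k)^{s+n}|=\epsilon^{\Re s+n}$, this forces normal convergence of the series on compact subsets of $\CC\setminus\{0,-1,-2,\ldots\}$. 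The resulting meromorphic function recovers the continuation of $\Omega_k$ already built in Theorem \ref{thm:functional_eq}, with at worst simple poles at the non-positive integers, and the residue at $s=-n$ is picked up by the single $n$-th term:
\begin{equation*}
\Res_{s=-n}\Omega_k \;=\; \lambda_n\cdot (\epsilon\omega_k)^{0} \;=\; \lambda_n.
\end{equation*}

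The only technical point worth checking carefully is the interchange of sum and integral on the near-origin piece, which reduces to the uniform convergence of the Taylor series of $e^{P_0}$ on the compact segment and is rather standard, so I would not expect a real obstacle. As a consistency check, differentiating the generating series via $P_0'(t)e^{P_0(t)}=(e^{P_0(t)})'$ produces the recursion $n\lambda_n+\sum_{l=1}^{\min(n,d)}\alpha_l\lambda_{n-l}=0$, and this is precisely the recursion that iterating the functional equation of Theorem \ref{thm:functional_eq} imposes on the residues $\Res_{s=-n}\Omega_k$, anchored by the common value $1$ at $n=0$ already computed there; this is an alternative proof, but the direct splitting above is more transparent and makes the $k$-independence of the residues manifest.
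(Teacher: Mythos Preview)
Your argument is correct and, interestingly, inverts the paper's order of presentation. The paper's proof of this statement is precisely what you sketch as a ``consistency check'': it sets $r_n=\Res_{s=-n}\Omega_k$, uses the functional equation to obtain the recursion $nr_n=-\sum_{l=1}^{d}\alpha_l r_{n-l}$, forms the generating series $F(t)=\sum_{n\ge0} r_n t^n$, derives $F'=P_0'F$, and concludes $F=e^{P_0}$ from the initial value $r_0=1$ established in Theorem~\ref{thm:functional_eq}. Your primary approach---splitting the integral near $t=0$, expanding $e^{P_0}$ termwise, and reading off each residue---is what the paper does \emph{afterwards} to obtain the Mittag-Leffler decomposition (splitting at $t=1$ rather than $t=\epsilon\omega_k$). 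Your route is more direct and makes the $k$-independence of the residues immediately visible without invoking the functional equation; the paper's route isolates a clean recursion and ties the residues explicitly to the difference equation, which fits the article's theme. One minor slip: the bound $|(\epsilon\omega_k)^{s+n}|=\epsilon^{\Re s+n}$ omits the factor $e^{-(2\pi k/d)\Im s}$ coming from $\arg\omega_k$, but since this factor is independent of $n$ it does not affect the normal convergence you need.
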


\begin{proof}[\textbf{Proof.}]
For $n\geq 0$ let $r_n\in \CC$ be the residue of $\Omega_k$ at $s=-n$, with $r_n=0$ if there is 
no pole, and $r_n=0$ for $n<0$. The functional equation (\ref{eq:functional_eq}) gives 
\begin{equation*}
r_n =\lim_{h\to 0} h\Omega_k (h-n) =\lim_{h\to 0}\frac{1}{h-n} \sum_{l=1}^d \alpha_l h\Omega_k(h-n+l) 
=- \frac{1}{n} \sum_{l=1}^d \alpha_l \, r_{n-l}
\end{equation*}
hence the recurrence relation
\begin{equation} \label{eq:rec}
nr_n =- \sum_{l=1}^d \alpha_l \, r_{n-l}
\end{equation}
Now, consider the generating power series
$$
F(t)=\sum_{n=0}^{+\infty} r_n t^n
$$
The recurrence relation (\ref{eq:rec})  gives
\begin{align*}
F'(t) = \sum_{n=0}^{+\infty} nr_n t^{n-1} &= -\sum_{l=1}^d \alpha_l \sum_{n=0}^{+\infty} r_{n-l} t^{n-1}\\
&= -\sum_{l=1}^d \alpha_l t^{l-1} \sum_{n=l}^{+\infty} r_{n-l} t^{n-l} \\
&= -\left (\sum_{l=1}^d \alpha_l t^{l-1} \right ) F(t) \\
&= P'_0(t) F(t)
\end{align*}
Since we have $F(0)=r_0=1$ from Theorem \ref{thm:functional_eq}, we get 
$$
F(t)=e^{P_0(t)}
$$
thus $r_n=\lambda_n$ as claimed.
\end{proof}

\textbf{Example.}

For $d=1$, the generating power series is
$$
F(t)=e^{-t} =\sum_{n=0}^{+\infty} \frac{(-1)^n}{n!} \, t^n
$$
and we recover the classical result that 
$$
\Res_{s=-n}\,  \Gamma =\frac{(-1)^n}{n!} \ .
$$

\medskip

Note that the residues at the simple poles at the negative integers are the same for all the 
functions $\Omega_0, \ldots, \Omega_{d-1}$. Indeed, for $k\not= l$, we can check directly 
that $\Omega_k-\Omega_l$ is an entire function because of the 
convergence for all $s\in \CC$ of the integral
$$
\Omega_k(s)-\Omega_l(s) = \int_{+\infty . \omega_l}^{+\infty . \omega_k} t^{s-1} e^{P_0(t)} \, dt 
=\int_{\g_{lk}} t^{s-1} e^{P_0(t)} \, dt
$$
where the integral can be taken over any path $\g_{lk}$ asymptotic to $+\infty . \omega_l$ and 
$+\infty . \omega_k$ in the proper direction and in $\CC-\RR_+$ (so with $0$ winding number around $0$). 
This integral depends holomorphically on the parameter $s\in \CC$.

Observe that if the coefficients of $P_0$ belong to a number field $\mathbb K$, $P_0(t)\in \mathbb K[t]$, then 
the residues of $\Omega_k$ belong also to $\mathbb K$. 
Another arithmetical observation  is the following:
\begin{corollary}
We assume that the only non-zero coefficients of $P_0$ are for powers divisible by an integer $n_0\geq 2$, that is, 
if $a_k\not=0$ then $n_0|k$. 

Then, if $n_0$ does not divide $n$, we have $r_n=0$.
\end{corollary}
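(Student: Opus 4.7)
The plan is to reduce the corollary to Theorem \ref{thm:residues}, which already identifies the residue $r_n$ of $\Omega_k$ at $s=-n$ with the $n$-th Taylor coefficient $\lambda_n$ of the entire function $e^{P_0(t)}=\sum_{n\geq 0}\lambda_n t^n$. Thus it suffices to prove the purely algebraic statement that $\lambda_n=0$ whenever $n_0\nmid n$.

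Next I would exploit the structural content of the hypothesis: saying that $a_k\neq 0$ forces $n_0\mid k$ is equivalent to saying that $P_0$ is a polynomial in the new variable $u=t^{n_0}$. Concretely, there exists $Q\in\CC[u]$ with $Q(0)=0$ such that $P_0(t)=Q(t^{n_0})$. Exponentiating, $e^{P_0(t)}=e^{Q(t^{n_0})}$ is a convergent power series in $t^{n_0}$, so only monomials $t^n$ with $n_0\mid n$ carry a nonzero coefficient. Hence $\lambda_n=0$, and therefore $r_n=0$, whenever $n_0\nmid n$.

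As a consistency check, or as an alternative argument that avoids the substitution, one can argue directly from the recurrence (\ref{eq:rec}). Since $\alpha_l=-l a_l$, the hypothesis gives $\alpha_l=0$ unless $n_0\mid l$, so in the relation $n r_n=-\sum_{l=1}^{d}\alpha_l r_{n-l}$ every nonvanishing term on the right involves an index $n-l$ satisfying $n-l\equiv n\pmod{n_0}$. Consequently the set $\{m\geq 1:n_0\nmid m\}$ is stable under the recurrence. Together with the initial data $r_0=1$ (where $n_0\mid 0$) and $r_m=0$ for $m<0$, strong induction on $n$ yields $r_n=0$ for every $n\geq 1$ with $n_0\nmid n$.

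I do not anticipate any genuine obstacle here: once Theorem \ref{thm:residues} is in hand, the corollary is a direct expression of the fact that the exponential of an $n_0$-periodic polynomial is itself an $n_0$-periodic power series. The only mild care needed is to keep straight the bookkeeping in the recurrence-based variant, namely that the ``initial'' coefficient $r_0=1$ sits in the residue class $0\pmod{n_0}$ and so does not spoil the induction.
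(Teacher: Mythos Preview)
Your proposal is correct and your primary argument---reducing to Theorem \ref{thm:residues} and then observing that $e^{P_0(t)}$ is a power series in $t^{n_0}$---is essentially the paper's own proof, which writes $e^{P_0(t)}=\prod_k e^{a_k t^k}$ and expands to reach the same conclusion. Your alternative recurrence-based argument via (\ref{eq:rec}) is a valid extra route that the paper does not pursue.
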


\begin{proof}[\textbf{Proof.}]
From the previous Theorem we have
$$
e^{P_0(t)}=\prod_{k=1}^d e^{a_kt^k}= \prod_{k=1}^d \left (\sum_{m\geq 0}\frac{a_k^m}{m!} t^{mk} \right )
$$
and when we expand the last product we get the result.
\end{proof}

We have a more precise result than just the computation of the residues. We 
can determine the Mittag-Leffler decomposition of $\Omega_k$.This is an analytic result that requires some estimates.
\begin{theorem}
The Omega function $\Omega_k$ has the Mittag-Leffler decomposition:
$$
\Omega_k(s)=\sum_{n=0}^{+\infty} \frac{\lambda_n}{s+n} +\int_1^{+\infty . \omega_k} t^{s-1}  e^{P_0(t)} \, dt 
$$
where the integral is an entire function of order $1$.
\end{theorem}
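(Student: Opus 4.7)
The plan is to split the defining integral of $\Omega_k(s)$ (valid for $\Re s > 0$) at the point $t=1$, recognize the near-zero part as the Mittag-Leffler sum, and then extend the identity to all of $\CC$ by analytic continuation. Concretely, for $\Re s > 0$ I would first deform the straight contour from $0$ to $+\infty\cdot\omega_k$ into the concatenation of the real segment $[0,1]$ followed by a path $\gamma_k$ running from $1$ out to $+\infty\cdot\omega_k$ (for instance, along the unit arc to $\omega_k$ and then radially outward). Since $t^{s-1}e^{P_0(t)}$ is holomorphic in $\CC\setminus(-\infty,0]$ and $|e^{P_0(t)}|\le e^{-c|t|^d}$ on any circular arc staying inside the convergence sector containing both the positive real axis and the ray through $\omega_k$, Cauchy's theorem applied to the contour closed off by a large circular arc (whose contribution vanishes as the radius goes to infinity) gives
\[
\Omega_k(s) \;=\; \int_0^1 t^{s-1} e^{P_0(t)}\, dt \;+\; \int_1^{+\infty\cdot\omega_k} t^{s-1} e^{P_0(t)}\, dt.
\]

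Next, on $[0,1]$ the Taylor series $e^{P_0(t)}=\sum_{n\ge 0}\lambda_n t^n$ converges uniformly, so termwise integration yields
\[
\int_0^1 t^{s-1} e^{P_0(t)}\, dt \;=\; \sum_{n=0}^{+\infty}\lambda_n\int_0^1 t^{s+n-1}\,dt\;=\;\sum_{n=0}^{+\infty}\frac{\lambda_n}{s+n},
\]
where the series converges absolutely since $|\lambda_n|$ decays super-geometrically (from the fact that $e^{P_0}$ is entire of order $d$). This produces the desired Mittag-Leffler sum for $\Re s > 0$; since this sum is meromorphic on all of $\CC$ with simple poles at the negative integers and residues $\lambda_n$ (consistent with Theorem \ref{thm:residues}), and the tail integral is manifestly holomorphic in $s$, the identity extends to $\CC$ by analytic continuation.

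It remains to show that $E(s):=\int_1^{+\infty\cdot\omega_k} t^{s-1} e^{P_0(t)}\, dt$ is entire of order $1$. Holomorphy in $s$ follows from uniform convergence of the integral on compact subsets of $\CC$, which in turn is granted by the dominated decay $|e^{P_0(t)}|\le Ce^{-|t|^d/(2d)}$ along $\gamma_k$ for $|t|$ large enough. Parametrizing $\gamma_k$ so that $t=r\omega_k$ for $r\ge 1$ (the bounded arc part contributes a harmless factor), we have $|t^{s-1}|\le r^{\Re s-1} e^{2\pi|\Im s|\, k/d}$, and hence
\[
|E(s)| \;\le\; C\,e^{2\pi|\Im s|\, k/d}\int_1^{+\infty} r^{\Re s - 1}\, e^{-r^d/(2d)}\, dr.
\]
The $\Im s$-factor gives linear-in-$|s|$ exponential growth, which is of order $1$. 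For the radial integral, a Laplace-method estimate (or direct comparison with $\Gamma(\Re s/d)$) shows it grows at most like $e^{C|\Re s|\log|\Re s|}$, which is also of order $1$. Combining these, $\log|E(s)|=O(|s|\log|s|)$, so $E$ has order at most $1$; and the vertical exponential factor is actually attained, making the order exactly $1$.

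The main obstacle is the analytic bookkeeping in Step 1: justifying simultaneously the contour deformation and the vanishing of the large circular arc, which requires confirming that the intermediate sector between the positive real axis and the ray of argument $2\pi k/d$ lies inside the asymptotic convergence sector of $e^{P_0}$ at infinity (so that the arc contribution genuinely vanishes). Once the split is established, Steps 2 and 3 reduce to standard Taylor expansion and Laplace-type estimates.
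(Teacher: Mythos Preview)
Your approach is essentially the paper's: split the defining integral at $t=1$, expand the $[0,1]$ piece via the Taylor series of $e^{P_0}$, and bound the tail by reducing to $\Gamma(\Re s/d)$ after the substitution $t=\omega_k u$.

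One correction, which also dissolves what you flag as the ``main obstacle'': your invocation of a large circular arc to justify the contour deformation is both unnecessary and, as written, wrong for general $k$ (such an arc would cross sectors where $|e^{P_0}|$ blows up, so its contribution need not vanish). With your own choice of $\gamma_k$---the unit arc from $1$ to $\omega_k$ followed by the radial ray---the new contour and the original straight ray agree for $|t|\ge 1$. Hence the two paths differ only inside the closed unit disk, and Cauchy's theorem on that bounded region (the integrable singularity at $0$ being harmless for $\Re s>0$) already gives the equality. The paper writes the split $\int_0^1+\int_1^{+\infty\cdot\omega_k}$ without comment, tacitly relying on exactly this bounded-region deformation; no large-radius argument enters at all.
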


The path of integration of the integral without its endpoint at $1$ is contained in $\CC-\RR_+$.
Observe that this Theorem shows that the Omega function $\Omega_k$ is a meromorphic function 
of order $1$.

\begin{corollary}
The Omega functions $\Omega_k$ are meromorphic functions of order $1$. 
 
\end{corollary}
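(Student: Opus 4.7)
The plan is to read the Corollary off the Mittag--Leffler decomposition of the previous Theorem. Writing $\Omega_k = S + I$ where
$$
S(s)=\sum_{n=0}^{+\infty}\frac{\lambda_n}{s+n}, \qquad I(s)=\int_1^{+\infty.\omega_k} t^{s-1}e^{P_0(t)}\,dt,
$$
the Theorem already states that $I$ is entire of order $1$, so the only thing to verify is that adding the meromorphic series $S$ does not push the order above $1$. On the other hand, the exponent of convergence of the pole sequence $(-n)_{n\geq 0}$ is exactly $1$, so the order is automatically at least $1$ and no finer lower bound is needed.

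For the upper bound, I would first control the Taylor coefficients $\lambda_n$ of $e^{P_0(t)}$. Since $e^{P_0(t)}$ is entire of order $d$, Cauchy's inequalities on circles $|t|=R$ optimized in $R$ yield an estimate of the form $|\lambda_n|\leq AB^n/(n!)^{1/d}$ for suitable constants $A,B>0$; in particular $\sum_n|\lambda_n|$ converges extremely fast and $\sum_n|\lambda_n|/n < \infty$. I would then estimate $|S(s)|$ for $s$ outside a fixed union of small discs around the negative integers (say $|s+n|\geq 1/2$ for every $n\geq 0$), by splitting the series at $n\approx 2|s|$: the tail $n>2|s|$ satisfies $|s+n|\geq n/2$ and contributes $\leq 2\sum_n|\lambda_n|/n$, while the head $n\leq 2|s|$ satisfies $|s+n|\geq 1/2$ and contributes $\leq 2\sum_n|\lambda_n|$. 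Both are absolute constants, so $S$ is uniformly bounded outside a fixed neighbourhood of its poles.

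Combining, away from that union of discs one has $|\Omega_k(s)|\leq|I(s)|+O(1)$, and so the meromorphic function $\Omega_k$ has order exactly $1$. The main obstacle is only bookkeeping: to turn the displayed bound into a formal statement about order, I would write $\Omega_k(s)=g(s)/h(s)$ with $h(s)=1/\Gamma(s)$, which is entire of order $1$ and vanishes simply at each $-n$ with $n\geq 0$. Because $\Omega_k$ has only simple poles at these points, the product $g(s)=\Omega_k(s)/\Gamma(s)$ is entire, and the bound on $S$ together with the order-$1$ bound on $I$ transfers to an order-$\leq 1$ estimate on $g$ on circles avoiding the poles, which suffices by the Hadamard definition of order for meromorphic functions.
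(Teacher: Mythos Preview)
Your argument is correct and follows essentially the same route as the paper: both deduce the order from the Mittag--Leffler decomposition $\Omega_k=S+I$, with the integral part $I$ already declared entire of order $1$ by the preceding Theorem and its accompanying Lemma. The paper simply records the Corollary as an observation and does not spell out any further details; your treatment is more careful in that you (i) explicitly bound the series $S$ away from the poles, (ii) invoke the exponent of convergence of the pole sequence for the lower bound, and (iii) pass to the entire quotient $\Omega_k/\Gamma$ to formalize the Hadamard order. These additions are sound and fill in the bookkeeping the paper leaves implicit, but the underlying idea is the same.
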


\begin{proof}[\textbf{Proof.}]
We write
$$
\Omega_k(s)= \int_0^1 t^{s-1}  e^{P_0(t)} \, dt +\int_1^{+\infty.\omega_k} t^{s-1}  e^{P_0(t)} \, dt 
$$
and we compute the first integral expanding the exponential in power series (uniformly convergent in $[0,1]$).
For $\Re s >0$, we have 
\begin{equation*}
\int_0^{1} t^{s-1}  e^{P_0(t)} \, dt =\sum_{n=0}^{+\infty} \lambda_n \int_0^{1} t^{s+n-1} \, dt 
=\sum_{n=0}^{+\infty}  \lambda_n\left[\frac{t^{s+n}}{s+n}\right]_0^1 =\sum_{n=0}^{+\infty}  \frac{\lambda_n}{s+n} \ .
\end{equation*}
The second integral can be bounded by the next Lemma that shows that it is an entire function of order $1$ 
(using that Euler Gamma function is of order $1$).
\end{proof}

\begin{lemma}
For universal constants $C_0, C_1 >0$, and $s\in \CC$, we have the estimate
$$
\left | \int_1^{+\infty . \omega_k} t^{s-1}  e^{P_0(t)} \, dt \right | \leq e^{-2\pi  \frac{k}{d} \Im s}  \left (C_0 + C_1 
d^{\Re s /d} \Gamma\left (\frac{\Re s}{d}\right )  \right )
$$
\end{lemma}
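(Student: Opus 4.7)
The plan is to reduce the contour integral to a real integral on $[1,+\infty)$ by parametrizing along the ray, extract the vertical exponential factor, and then compare the real integral to the one defining $\Gamma$ by a power substitution. First I would parametrize $t = r\omega_k$ with $r \in [1,+\infty)$, so that $dt = \omega_k\, dr$ and $t^{s-1}\, dt = \omega_k^s \, r^{s-1}\, dr$. Taking absolute values and using $|\omega_k^s| = e^{-2\pi k \Im s/d}$ pulls out the claimed vertical decay factor and leaves me to estimate
\[
\int_1^{+\infty} r^{\Re s - 1}\, e^{\Re P_0(r\omega_k)}\, dr.
\]

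Next I would bound $\Re P_0(r\omega_k)$. Writing $P_0(t) = -t^d/d + Q(t)$ with $Q(t) = \sum_{l=1}^{d-1} a_l t^l$ of degree $\leq d-1$, and using $\omega_k^d = 1$, one gets $\Re P_0(r\omega_k) = -r^d/d + \Re Q(r\omega_k)$ with $|\Re Q(r\omega_k)| \leq B r^{d-1}$ for $r\geq 1$, where $B = \sum_{l=1}^{d-1}|a_l|$. Since $B r^{d-1}$ is of strictly lower order than $r^d/d$, one can choose a threshold $R = R(P_0) \geq 1$ large enough that $B r^{d-1} \leq r^d/(2d)$ for $r \geq R$; alternatively, for any $\varepsilon>0$ one can absorb $Br^{d-1} \leq \varepsilon\, r^d/d + M_\varepsilon$ globally on $[1,\infty)$.

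Now I split the integral at $R$. On $[1,R]$ the factor $e^{\Re P_0(r\omega_k)}$ is bounded by a constant $M_0$, so this piece contributes at most $M_0 \int_1^R r^{\Re s -1}\, dr$, which is uniformly bounded for $\Re s \leq 0$ and grows at most like $R^{\Re s}$ otherwise. By Stirling's asymptotics, such polynomial growth is eventually dominated by $d^{\Re s/d}\Gamma(\Re s/d)$, so it may be absorbed into $C_0 + C_1 d^{\Re s/d}\Gamma(\Re s/d)$ with suitable constants. On $[R,+\infty)$ I use the bound $\Re P_0(r\omega_k) \leq -r^d/d + $ controlled term and perform the substitution $u = r^d/d$, which gives $r^{\Re s -1}\, dr = d^{\Re s/d - 1}\, u^{\Re s/d - 1}\, du$. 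The remaining integral then takes the form $\int u^{\Re s/d - 1} e^{-u(1+o(1))}\, du$, bounded above by a constant multiple of $\Gamma(\Re s/d)$, which supplies the $C_1\, d^{\Re s/d}\Gamma(\Re s/d)$ term.

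The main obstacle is handling the lower-order contribution $\Re Q(r\omega_k)$ cleanly, since $Q$ need not be bounded above on the ray. Making the constants $C_0, C_1$ truly independent of $s$ requires choosing $R$ (or $\varepsilon$) so that the remaining integral compares to the standard $\Gamma(\Re s/d)$, and then using the super-exponential growth of $\Gamma$ to absorb the polynomial-in-$R$ contributions from $[1,R]$ into the additive constant $C_0$ for small $\Re s$ and into the $C_1$ factor for large $\Re s$ via Stirling. Once these bookkeeping steps are done, the estimate takes the stated form and, in particular, grows in $|s|$ no faster than the entire function $\Gamma(s/d)$, which is of order $1$; this is what is needed for the previous corollary.
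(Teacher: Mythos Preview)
Your approach is essentially the paper's: pull out the factor $\omega_k^s$, reduce to a real ray integral, and compare the tail to $\Gamma(\Re s/d)$ via the substitution $u=r^d/d$. In fact your treatment of the tail is slightly more careful than the paper's: you split at a threshold $R=R(P_0)$ to control the lower-order term $\Re Q(r\omega_k)$, whereas the paper simply writes $e^{-\frac{1}{d}u^d(1+\cO(u^{-1}))}\leq (1+C)e^{-u^d/d}$, which glosses over the fact that the $\cO(u^{-1})$ correction in the exponent, once multiplied by $u^d/d$, is still unbounded.

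There is, however, one genuine gap. When you ``parametrize $t=r\omega_k$ with $r\in[1,+\infty)$'' you are running over the ray from $t=\omega_k$ to $t=+\infty\cdot\omega_k$, not the contour from $t=1$ to $t=+\infty\cdot\omega_k$ that the lemma is about. The lower limit of the integral is the real point $1$, so after the substitution $t=\omega_k u$ the $u$-contour starts at $\omega_k^{-1}$, not at $1$. You are missing the arc from $1$ to $\omega_k$ (equivalently, in $u$-space, from $\omega_k^{-1}$ to $1$). The paper handles this explicitly: it takes a finite-length path from $\omega_k^{-1}$ to $1$ bounded away from $0$ and absorbs its contribution into the additive constant $C$; this is precisely the origin of the term $C_0$ in the stated estimate. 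In your write-up the $C_0$ instead comes from the segment $[1,R]$ on the ray, which is a different piece. Once you add the arc and bound it separately, your argument goes through along the same lines as the paper's.
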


\begin{proof}[\textbf{Proof.}]
We make the change of variables $t=\omega_k u$
\begin{align*}
\int_1^{+\infty . \omega_k} t^{s-1} e^{P_0(t)} \, dt &= 
\omega_k^s \int_{\omega_k^{-1}}^{+\infty} u^{s-1} e^{-\frac{1}{d} u^d (1+\cO (u^{-1}))} \, du \\
&=e^{2\pi i \frac{k}{d} s} \int_{\omega_k^{-1}}^{+\infty} u^{s-1} e^{-\frac{1}{d} u^d (1+\cO (u^{-1}))} \, du
\end{align*}
This gives the bound
\begin{equation*}
\left |  \int_1^{+\infty . \omega_k} t^{s-1} e^{P_0(t)} \, dt \right | \leq   e^{-2\pi  \frac{k}{d} \Im s} 
\left | \int_{\omega_k^{-1}}^{+\infty} u^{s-1} e^{-\frac{1}{d} u^d (1+\cO (u^{-1}))} \, du \right |
\end{equation*}
Now, taking an integration path of uniform finite length from $1$ to $\omega_k$, bounded away from $0$, and not crossing $\RR_+$ 
(hence not winding around $0$), we get (using  
$C$ to denote several universal constants $C>0$ depending only on $d$ and $P_0$)
\begin{equation*}
\left |  \int_1^{+\infty . \omega_k} t^{s-1} e^{P_0(t)} \, dt \right | \leq   e^{-2\pi  \frac{k}{d} \Im s} \left ( C+
\left | \int_{1}^{+\infty} u^{s-1} e^{-\frac{1}{d} u^d (1+\cO (u^{-1}))} \, du \right | \right )
\end{equation*}
The last integral can be estimated by
\begin{align*}
 \left | \int_{1}^{+\infty} u^{s-1} e^{-\frac{1}{d} u^d (1+\cO (u^{-1}))} \, du \right | &\leq (1+C)\int_1^{+\infty} u^{\Re s-1} 
 e^{-\frac{1}{d} u^d } \, du \leq (1+C) \int_0^{+\infty} u^{\Re s-1}  e^{-\frac{1}{d} u^d } \, du \\
 &\leq (1+C) \, d^{\Re s/d} \, \Gamma\left (\frac{\Re s}{d}\right ) 
\end{align*}
(for the computation of the last integral we use the change of variable $v=u^d/d$).
\end{proof}

\section{Incomplete Omega functions.}

We define the Incomplete Omega functions that generalize the Incomplete Gamma function.

\begin{definition}
For $z, s\in \CC$, $\Re s >0$,  the Incomplete Omega function $\Omega(s,z)$ is defined by
\begin{equation*}
\Omega(s,z) =\int_0^{z} t^{s-1} e^{P_0(t)} \, dt 
\end{equation*}
\end{definition}
The integration path from $0$ to $z$ is choosen not crossing $\RR_+$ (hence not winding around $0$). 
For $P_0(t)=-t$ this is the classical Incomplete Gamma function.
Observe that we recover all the $(\Omega_k)$ functions by taking the appropriate limit of $\Omega(s,z)$ as $z\to \infty$, 
$$
\Omega_k(s)=\lim_{z\to +\infty.\omega_k} \Omega (s,z)
$$
For the particular integral values $s=1,2,\ldots, d-1$ these Incomplete Omega functions 
are the transcendental entire functions of the variable $z\in \CC$
studied in \cite{BPM3} which form a basis of the fundamental 
vector space of functions on a simply connected log-Riemann surface
with exactly $d$ infinite ramification points. Some of the results in this section generalize some results from \cite{BPM3}.
Following the same Abel's philosophy that inspires \cite{BPM3}, we prove that we only need to use a finite number 
of transcendentals $(\Omega(s+k,z))_{0\leq k\leq d-1}$ to compute integrals of the form 
$$
\int_0^{z} t^{s-1}Q(t) e^{P_0(t)} \, dt
$$
where $Q$ is a polynomial.

\begin{proposition}
Let $Q(t)\in \CC[t]$. For $d\geq 2$, the integral 
$$
\int_0^{z} t^{s-1} Q(t) e^{P_0(t)} \, dt
$$
is of the form
\begin{equation*}
\int_0^{z} t^{s-1} Q(t) e^{P_0(t)} \, dt = z^{s} A(s,z) \, e^{P_0(z)} + \sum_{k=0}^{d-1} c_k(s) \, \Omega(s+k, z) \,
\end{equation*}
where $A \in \CC[s,z]$ is a polynomial,  and the polynomial coefficients $c_k(s)\in \CC[s]$ do have coefficients
depending polynomially on the coefficients $(a_1, \ldots , a_{d-1})$ of $P_0$.
\end{proposition}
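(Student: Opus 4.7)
The plan is to reduce by linearity to the case of monomials $Q(t)=t^m$, in which case the integral is $\Omega(s+m+1,z)$, and then prove the reduction by induction on $m$. The engine is an integration by parts identity with boundary terms, analogous to the one used in the proof of Theorem \ref{thm:functional_eq}, but now carried out with a finite upper limit $z$ so that the boundary contribution survives.

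Specifically, starting from
$$\frac{\dd}{\dd t}\bigl(t^{s+m} e^{P_0(t)}\bigr)=(s+m)\, t^{s+m-1} e^{P_0(t)} + t^{s+m} P_0'(t)\, e^{P_0(t)},$$
and using $P_0'(t)=-t^{d-1}+\sum_{l=1}^{d-1} l a_l\, t^{l-1}$, integration from $0$ to $z$ gives, for $\Re(s+m)>0$,
$$\Omega(s+m+d,z)=(s+m)\,\Omega(s+m,z)+\sum_{l=1}^{d-1} l a_l\, \Omega(s+m+l,z) - z^{s+m} e^{P_0(z)},$$
which extends meromorphically in $s$ to all of $\CC$. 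For $0\leq m\leq d-2$, the function $\Omega(s+m+1,z)$ is already one of the required basis functions, so the claim holds with $A=0$. For $m=d-1$, the identity above with $m$ replaced by $0$ expresses $\Omega(s+d,z)$ as a combination of $\Omega(s+k,z)$, $0\leq k\leq d-1$, plus $-z^{s} e^{P_0(z)}$. Repeating the identity with $m=1,2,\ldots$ reduces $\Omega(s+d+m,z)$ to lower index Omega's (the highest remaining index on the right is $s+m+d-1$), so a straightforward induction on $m$ shows that every $\Omega(s+m+1,z)$ can be written in the claimed form.

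The coefficient bookkeeping is the only delicate point. At every step of the recursion we multiply by $(s+m)$, which is linear in $s$, or by constants $l a_l$; we never divide by anything involving $s$ (in contrast with the residue recursion of Theorem \ref{thm:residues}, where one divides by $n$). Hence the accumulated coefficients $c_k(s)$ stay in $\CC[s]$ and depend polynomially on $a_1,\ldots,a_{d-1}$. The boundary terms accumulate as $-\sum z^{s+m_j}\, p_j(s)\, e^{P_0(z)}=z^s A(s,z)\, e^{P_0(z)}$ with $A\in\CC[s,z]$, again polynomial in the $a_l$. The expected main obstacle is nothing subtle — just verifying that this polynomial structure survives the induction without any division creeping in; once that is confirmed, the argument is essentially routine.
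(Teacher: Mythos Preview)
Your argument is correct. The integration-by-parts identity you derive,
\[
\Omega(s+m+d,z)=(s+m)\,\Omega(s+m,z)+\sum_{l=1}^{d-1} l a_l\,\Omega(s+m+l,z)-z^{s+m}e^{P_0(z)},
\]
is exactly the ``incomplete'' version of the functional equation (\ref{eq:functional_eq}), and strong induction on the index reduces every $\Omega(s+N,z)$ with $N\geq d$ to the basis $\Omega(s,z),\ldots,\Omega(s+d-1,z)$ together with a boundary term of the right shape. Your observation that no division by anything involving $s$ occurs is precisely what guarantees $c_k(s)\in\CC[s]$, and the polynomial dependence on $(a_1,\ldots,a_{d-1})$ follows since each recursion step introduces only the linear factors $la_l$.

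The paper organizes the same reduction differently: rather than passing to monomials, it keeps $Q$ general and performs Euclidean division $Q=A_1 P_0'+B_1$ with $\deg B_1\leq d-2$, then integrates $\int_0^z t^s A_1(t)P_0'(t)e^{P_0(t)}\,dt$ by parts using $e^{P_0}$ as an antiderivative of $P_0'e^{P_0}$. This produces the boundary term $z^sA_1(z)e^{P_0(z)}$ and two residual integrals of strictly smaller degree, on which one inducts. Your route is arguably cleaner for the present statement: it avoids the Euclidean division and the slightly fiddly rewriting of $s\int_0^z t^{s-1}A_1(t)e^{P_0(t)}\,dt$ that the paper needs. The paper's route, on the other hand, makes the role of $P_0'$ structurally explicit and is closer in spirit to the Abel-type reductions referenced from \cite{BPM3}. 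Either way the underlying mechanism is the same integration by parts, just packaged differently.
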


\begin{proof}[\textbf{Proof.}]
Observe that we have
\begin{equation*}
 \int_0^{z} t^{s-1} Q(t) e^{P_0(t)} \, dt = Q(0) \Omega(s,z) + \int_0^{z} t^{s-1} (Q(t)-Q(0)) e^{P_0(t)} \, dt
\end{equation*}
and we can assume that $Q(0)=0$. We are reduce to prove the result for $Q(t) = t R(t)$, or to
consider the integral
\begin{equation*}
 \int_0^{z} t^{s-1} Q(t) e^{P_0(t)} \, dt =  \int_0^{z} t^{s} R(t) e^{P_0(t)} \, dt
\end{equation*}

First we consider the case $d=1$.  We prove the result for  $R(t)=t^n$ integrating by parts $n+1$ times
\begin{align*}
&\int_0^z t^{s+n} e^{-t} \, dt =\left [-t^{s+n} e^{-t}\right ]_0^z +(s+n) \int_0^z t^{s+n-1} e^{-t} \, dt \\
&=-z^{s+n} e^{-z} +(s+n) \int_0^z t^{s+n-1} e^{-t} \, dt \\
&=-(z^{s+n} +(s+n)z^{s+n-1})e^{-z}+ (s+n)(s+n-1)\int_0^z t^{s+n-2} e^{-t} \, dt \\
&\vdots \\
&=-z^s(z^{n} +(s+n)z^{n-1}+\ldots )e^{-z} + (s+n)(s+n-1)\ldots s \, \Omega(s,z)
\end{align*}
For a general polynomial $R(t)$ we have the result by linear decomposition of the integral.

In the rest of the proof we assume $d\geq 2$.
If $q=\deg R \leq d-2$, by linearity, the integral is a linear combination
(with the coefficients of $tR(t)$) of $(\Omega(s+k,z))_{0\leq k\leq d-1}$ and the result follows.

If $\deg R \geq d-1$, then we consider the Euclidean division of $R(t)$ by $P_0'(t)$,
$$
R(t)=A_1(t)P_0'(t)+B_1(t)
$$
with $A_1, B_1\in \CC[t]$, $\deg B_1 \leq  d-2$ and $\deg A_1 =\deg R-(d-1) =q-(d-1)\leq q-1$.
We proceed  splitting the integral:
\begin{equation*}
\int_0^{z} t^{s} R(t) e^{P_0(t)} \, dt = \int_0^z t^{s} A_1(t)P_0'(t) e^{P_0(t)} \, dt + \int_0^z t^{s} B_1(t) e^{P_0(t)} \, dt
\end{equation*}
Since $\deg B_1 \leq d-2$, the second integral is a linear combination 
of $\Omega(s,z), \Omega(s+1,z),\ldots ,\Omega(s+d-1,z)$,
thus of the desired 
form, and we can forget about it. We work on the first integral integrating by parts, 
\begin{equation*}
\int_0^z t^{s} A_1(t)P_0'(t) e^{P_0(t)} \, dt = 
\left [t^{s} A_1(t)e^{P_0(t)}\right ]_0^z-\int_0^z \left (t^{s} A_1(t)\right )'  e^{P_0(t)} \, dt 
\end{equation*}
Then we get:
\begin{equation*}
\int_0^z t^{s} A_1(t)P_0'(t) e^{P_0(t)} \, dt = z^{s} A_1(z)e^{P_0(z)}- \int_0^z t^{s} A'_1(t)  e^{P_0(t)} \, dt 
-  s \int_0^z t^{s-1} A_1(t)  e^{P_0(t)} \, dt
\end{equation*}
The first integral in the right hand side is of the same form as the initial one with $R(t)$ but
with $\deg A'_1 =\deg A_1-1\leq \deg R -(d-1)-1= q-d\leq q-2$ (using here $d\geq 2$), hence by descending
induction we can forget about it. For the second integral, we can write
$$
s\int_0^z t^{s-1} A_1(t)  e^{P_0(t)} \, dt =A_1(0) s \, \Omega(s,z) + s 
\int_0^z t^{s} \left (\frac{A_1(t)-A_1(0)}{t} \right )  e^{P_0(t)} \, dt
$$
and $t^{-1} (A_1(t)-A_1(0))$ is a polynomial of degree $\deg A_1 -1\leq q-2$. 
Then the descending induction gives the expression for the integral as announced.

The coefficients of $P_0$ appear first linearly in the Euclidean divisions by $P_0'$ then, by repeated 
Euclidean divisions the dependence of the $c_k(s)$ is polynomial on the coefficients of $P_0$.
\end{proof}

Now, we can prove that the Omega functions $(\Omega_k(s))_{0\leq k\leq d-1}$ generate a large class of exponential periods:

\begin{corollary} \label{cor:computation}
Let $Q(t)\in \CC[t]$ and $0\leq n\leq d-1$. The exponential period
$$
\int_0^{+\infty.\omega_n} t^{s-1}Q(t) \, e^{P_0(t)} \, dt
$$
is a linear combination of the exponential periods $(\Omega_k(s))_{0\leq k\leq d-1}$
$$
\int_0^{+\infty.\omega_n} t^{s-1}Q(t) \, e^{P_0(t)} \, dt =\sum_{k=0}^{d-1} c_k(s) \, \Omega_n(s+k)
$$
where the coefficients $c_k(s)$ are polynomials on $s$ and on the coefficients $(a_1,\ldots , a_{d-1})$.
\end{corollary}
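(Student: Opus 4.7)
The plan is to apply the preceding corollary with a finite upper limit $z$ and pass to the limit $z\to +\infty\cdot \omega_n$ along the ray. That corollary supplies
$$
\int_0^{z} t^{s-1} Q(t, t^s) \, e^{P_0(t)}\, dt = A(s,z, z^s)\,e^{P_0(z)} + \sum_{k=0}^{d-1} c_k(s)\,\Omega(s+k, z),
$$
with $A\in\CC[s,z,z^s]$ and $c_k(s)\in\CC[s]$ whose coefficients are polynomial in $(a_1,\ldots,a_{d-1})$. Passing to the limit will directly deliver the claimed identity, with the same coefficients $c_k(s)$, so no new algebra is needed beyond what the previous corollary already provides.

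The decisive step is to show that the boundary contribution $A(s,z,z^s)\,e^{P_0(z)}$ vanishes as $z=r\omega_n$ with $r\to+\infty$. Because $\omega_n^d=1$, along this ray one has $P_0(r\omega_n)=-\tfrac{1}{d}r^d+O(r^{d-1})$, so $|e^{P_0(z)}|$ decays super-exponentially in $r$. On the other hand $|z^s|=r^{\Re s}e^{-2\pi n\Im s/d}$ grows only polynomially in $r$ for $s$ fixed, and $A(s,z,z^s)$, being a polynomial in $z$ and $z^s$, is likewise of at most polynomial size. The exponential decay wins and the boundary term goes to $0$. The same estimate also ensures the absolute convergence of the integral $\int_0^{+\infty\omega_n}$ along the ray, justifying the exchange of limit and integral.

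Using $\Omega_n(\sigma)=\lim_{z\to+\infty\cdot \omega_n}\Omega(\sigma,z)$ from the definition of the Omega functions, taking $z\to+\infty\cdot\omega_n$ in the identity above then gives
$$
\int_0^{+\infty\cdot \omega_n} t^{s-1}Q(t, t^s) \, e^{P_0(t)}\, dt = \sum_{k=0}^{d-1} c_k(s)\,\Omega_n(s+k),
$$
which is the stated linear combination of Omega functions (with polynomial coefficients inherited from the preceding corollary). I do not anticipate any real obstacle: once the finite-$z$ corollary is in hand, the argument is purely a passage-to-limit, and the only technical verification is the straightforward decay estimate for the boundary term.
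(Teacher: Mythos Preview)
Your proposal is correct and follows essentially the same route as the paper: start from the finite-$z$ identity of the preceding corollary, let $z\to +\infty\cdot\omega_n$, observe that the boundary term $A(s,z,z^s)e^{P_0(z)}$ vanishes because the exponential decay of $e^{P_0(z)}$ along the ray dominates the polynomial growth of $A$, and conclude using $\Omega_n(\sigma)=\lim_{z\to+\infty\cdot\omega_n}\Omega(\sigma,z)$. Your write-up in fact supplies a bit more detail on the decay estimate than the paper does.
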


\begin{proof}[\textbf{Proof.}]
For $d\geq 2$, we have from the previous Proposition that 
\begin{equation*}
\int_0^{z} t^{s-1}Q(t) e^{P_0(t)} \, dt = A(s, z,z^s)e^{P_0(z)} + \sum_{k=0}^{d-1} c_k(s)\Omega(s+k, z)
\end{equation*} 
When $z\to +\infty.\omega_n$ the first term in the right side vanish, since 
$A(s,z,z^s)e^{P_0(z)} \to 0$ for a polynomial $A(s,z,z^s)\in \CC[z]$
(the exponential decay of $e^{P_0(z)}$ takes over the polynomial divergence of $A(z)$), 
and we get
\begin{equation*}
\int_0^{+\infty.\omega_n} t^{s-1}Q(t) e^{P_0(t)} \, dt = \sum_{k=0}^{d-1} c_k(s)\Omega_n(s+k) 
\end{equation*}

\end{proof}

\section{Linear independence.}

The row vector build with  Omega functions $\mathbf{\Omega}(s)=(\Omega_k(s))_{0\leq k\leq d-1}$ has the following 
important linear independence property:

\begin{theorem}\label{th:linear_independence}
For any $s\in \CC-\NN_-^*$, the vectors  $\mathbf{\Omega}(s+1), \mathbf{\Omega}(s+2), \ldots , 
\mathbf{\Omega}(s+d)$ are linearly independent, 
$$
\Delta (s) \not= 0
$$
where 
$$
\Delta(s| a_1, \ldots , a_{d-1})=\det  \left [ 
\begin{matrix}
\Omega_{11} & \Omega_{12} & \ldots &\Omega_{1d} \\
\Omega_{21} & \Omega_{22} & \ldots &\Omega_{2d} \\ 
\vdots &\vdots &\ddots & \vdots \\
\Omega_{d1} & \Omega_{d2} & \ldots &\Omega_{dd} \\ 
\end{matrix}
\right ]  \ \ .
$$
where $\Omega_{kl} =\Omega_{k-1}(s+l)$. 

More precisely, we can compute 
$$
\Delta (s| a_1, \ldots , a_{d-1}) =\Delta (s| 0, \ldots , 0) \exp \left ( \Pi_d(s, a_1, \ldots, a_{d-1}) \right )
$$
where $\Pi_d(s, a_1, \ldots, a_{d-1})$ is a universal polynomial with rational coefficients.
\end{theorem}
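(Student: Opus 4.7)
The plan is to analyze $\Delta(s|a_1,\ldots,a_{d-1})$ via its partial derivatives in $a_1,\ldots,a_{d-1}$, exploiting the fact that $\partial_{a_j}$ shifts the Omega argument by $j$. The starting observation is that differentiating under the integral sign gives
$$
\partial_{a_j}\Omega_k(s) = \int_0^{+\infty.\omega_k} t^{s+j-1} e^{P_0(t)}\,dt = \Omega_k(s+j),
$$
initially for $\Re s>0$ and then meromorphically in $s$ by Theorem \ref{thm:functional_eq}. Letting $M=M(s|a)$ denote the $d\times d$ matrix with entries $M_{kl}=\Omega_{k-1}(s+l)$ (so that $\Delta=\det M$), this identifies $(\partial_{a_j}M)_{kl}=\Omega_{k-1}(s+l+j)$.

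Next I would re-express each shifted value $\Omega_{k-1}(s+l+j)$ as a polynomial combination of the columns of $M$. Iterating the functional equation
$$
\Omega_k(s+m)=(s+m-d)\,\Omega_k(s+m-d)-\sum_{i=1}^{d-1}\alpha_i\,\Omega_k(s+m-d+i),\quad m\geq d+1,
$$
reduces every shift $\Omega_k(s+m)$ to a $\ZZ[s,a_1,\ldots,a_{d-1}]$-linear combination of $\Omega_k(s+1),\ldots,\Omega_k(s+d)$ by induction on $m$ (using $\alpha_i=-ia_i$). This produces matrices $C_j$ with entries in $\ZZ[s,a_1,\ldots,a_{d-1}]$ satisfying $\partial_{a_j}M=M\cdot C_j$, so that Jacobi's formula gives
$$
\frac{\partial_{a_j}\Delta}{\Delta} = \tr(M^{-1}\partial_{a_j}M) = \tr(C_j)\in \ZZ[s,a_1,\ldots,a_{d-1}].
$$
Equality of mixed partials then yields a polynomial $\Pi_d\in \QQ[s,a_1,\ldots,a_{d-1}]$ with $\Pi_d(s,0,\ldots,0)=0$ and $\partial_{a_j}\Pi_d=\tr(C_j)$, so that $\Delta(s|a)=\Delta(s|0)\exp(\Pi_d(s,a))$ holds locally near $a=0$; both sides being entire functions of $a\in\CC^{d-1}$, the identity extends globally.

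To conclude, I would compute $\Delta(s|0)$ explicitly: at $P_0(t)=-t^d/d$, the substitution $t=\omega_k r$ followed by $u=r^d/d$ yields $\Omega_k(s)=\omega_k^{s}\,d^{s/d-1}\,\Gamma(s/d)$. Factoring $\omega_{k-1}^s$ from row $k$ and $d^{(s+l)/d-1}\Gamma((s+l)/d)$ from column $l$ reduces the remaining determinant to a non-zero Vandermonde in the distinct $d$-th roots of unity, giving
$$
\Delta(s|0) = K\cdot d^{s+(1-d)/2}\,\prod_{l=1}^{d}\Gamma\!\left(\tfrac{s+l}{d}\right)
$$
for an explicit nonzero constant $K$. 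Since $\Gamma$ has no zeros and the poles of these factors across $l=1,\ldots,d$ cover exactly the negative integers, $\Delta(s|0)\neq 0$ on $\CC\setminus\NN_-^*$; combined with the exponential identity, $\Delta(s|a)\neq 0$ on the same set, which proves the linear independence.

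The main obstacle is step two: establishing cleanly that iterating the functional equation really produces matrices $C_j$ with entries in $\ZZ[s,a_1,\ldots,a_{d-1}]$, so that $\tr(C_j)$ is honestly a polynomial and a universal primitive $\Pi_d\in \QQ[s,a_1,\ldots,a_{d-1}]$ exists. This is a finite but somewhat bookkeeping-heavy induction on the shift amount $m$.
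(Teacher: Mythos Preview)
Your proposal is correct and follows essentially the same strategy as the paper: show that each $\partial_{a_j}\log\Delta$ is a polynomial in $s,a_1,\ldots,a_{d-1}$, integrate to obtain the exponential formula, and then evaluate $\Delta(s|0,\ldots,0)$ via the Vandermonde determinant and the $\Gamma$-values (the paper goes one step further and collapses $\prod_{l=1}^d\Gamma((s+l)/d)$ with Gauss's multiplication formula). The only technical variation is that you obtain the polynomial relation $\partial_{a_j}M=M\,C_j$ by iterating the functional equation, whereas the paper invokes its Corollary~\ref{cor:computation} (the integration-by-parts reduction for incomplete Omega functions); since the functional equation itself is that same integration by parts, the two arguments are equivalent, and your ``main obstacle'' is in fact the easy induction you describe.
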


In view of the last formula, the result follows from  $\Delta (s| 0, \ldots , 0) \not= 0$. We will prove the last 
formula and compute explicitly the determinant $\Delta (s| 0, \ldots , 0)$. 
These computations are similar to the ones for 
the Ramificant Determinant (see \cite{BPM3}) that corresponds to the special case $s=0$.

We can compute $\Omega_k(s+l|0, \ldots , 0) $ using Euler Gamma function.

\begin{lemma}\label{lemma:computation}
We have
$$
\Omega_k(s+l|0, \ldots , 0) 
= \omega^{k(s+l)} d^{\frac{s+l}{d}-1} \Gamma\left ( \frac{s+l}{d}\right )
$$
\end{lemma}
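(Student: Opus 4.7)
The plan is to compute $\Omega_k(s+l\mid 0,\ldots,0)$ by two successive changes of variable, reducing it directly to the defining integral of Euler's Gamma function. When all $a_1=\ldots=a_{d-1}=0$ we have $P_0(t)=-t^d/d$, so for $\Re s>0$ (it suffices to prove the formula here and invoke meromorphic continuation, since both sides are meromorphic of order $1$),
\[
\Omega_k(s\mid 0,\ldots,0)=\int_0^{+\infty\cdot\omega_k} t^{s-1}\,e^{-t^d/d}\,dt.
\]

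First I would rotate the contour by setting $t=\omega_k u$ so that the ray from $0$ to $+\infty\cdot\omega_k$ is parametrized by $u\in[0,+\infty)$. Since $\omega_k^d=1$, the exponential becomes $e^{-u^d/d}$, and picking the principal branch of $t^{s-1}$ consistent with $\arg t=2\pi k/d$ gives $t^{s-1}=\omega_k^{s-1}u^{s-1}$; together with $dt=\omega_k\,du$ this produces the factor $\omega_k^s=\omega^{ks}$ in front. Strictly speaking one should justify this by Cauchy's theorem on a large pie slice of angle $2\pi k/d$, and one checks that the arc contribution at radius $R\to\infty$ vanishes because $\Re(-t^d/d)\le -\cos(d\theta)R^d/d$ stays negative throughout the sector swept out (the polynomial has exponential decay in each of the sectors separating the $\omega_k$ directions). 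This reduces the computation to
\[
\Omega_k(s\mid 0,\ldots,0)=\omega^{ks}\int_0^{+\infty} u^{s-1}\,e^{-u^d/d}\,du.
\]

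Second, I would substitute $v=u^d/d$, so that $u=(dv)^{1/d}$, $du=d^{1/d-1}v^{1/d-1}\,dv$, and $u^{s-1}=d^{(s-1)/d}v^{(s-1)/d}$. Combining the exponents gives $d^{s/d-1}v^{s/d-1}$ under the integral, yielding
\[
\int_0^{+\infty} u^{s-1}\,e^{-u^d/d}\,du=d^{s/d-1}\int_0^{+\infty} v^{s/d-1}\,e^{-v}\,dv=d^{s/d-1}\,\Gamma\!\left(\tfrac{s}{d}\right).
\]
Substituting $s+l$ for $s$ gives exactly the claimed formula $\omega^{k(s+l)} d^{(s+l)/d - 1}\Gamma((s+l)/d)$.

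The only nontrivial step is the contour rotation at the start; everything else is a routine substitution. Since both sides are meromorphic functions of $s$ of order $1$, the identity established for $\Re s>0$ extends to all $s\in\CC$ by analytic continuation, finishing the proof.
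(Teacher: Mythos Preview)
Your proof is correct and follows exactly the paper's route: the substitution $t=\omega_k u$ followed by $v=u^d/d$. One remark, though: the aside about Cauchy's theorem on a pie slice is unnecessary and in fact wrong as stated. The integral defining $\Omega_k$ is already taken along the ray $[0,+\infty\cdot\omega_k]$, so writing $t=\omega_k u$ with $u\in[0,\infty)$ is a direct parametrization of that contour, not a deformation---no appeal to Cauchy is needed. Moreover, your claim that $\Re(-t^d/d)$ stays negative throughout the sector of angle $2\pi k/d$ is false: as $\theta$ runs from $0$ to $2\pi k/d$, the argument $d\theta$ sweeps through $2\pi k$, so $\cos(d\theta)$ changes sign and the exponential actually blows up in the intermediate anti-Stokes sectors. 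Fortunately none of this matters, since the contour-rotation step is just a change of variable along the given ray.
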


\begin{proof}[\textbf{Proof.}]
We first make the change of variables $t=\omega^k u$, and then $v=u^d/d$,
\begin{align*}
 \Omega_k(s+l|0, \ldots , 0) &= \int_0^{+\infty . \omega^k} t^{s+l-1} e^{-\frac{1}{d} t^d} \, dt \\
 &= \omega^{k(s+l)} \int_0^{+\infty } u^{s+l-1} e^{-\frac{1}{d} u^d} \, du \\
 &= \omega^{k(s+l)} d^{\frac{s+l}{d}-1} \int_0^{+\infty } v^{\frac{s+l}{d}-1} e^{-v} \, dv \\
 &= \omega^{k(s+l)} d^{\frac{s+l}{d}-1} \Gamma\left ( \frac{s+l}{d}\right ) 
\end{align*}

\end{proof}

Now we recall the following well known elementary Vandermonde Lemma:

\begin{lemma}\label{lemma:Vandermonde}
If $\xi_1 ,\ldots , \xi_d$ are the $d$ roots
of a monic polynomial $Q(X)$, then we can compute the 
Vandermonde determinant $V(\xi_1 ,\ldots , \xi_d)$ of the $(\xi_1 ,\ldots , \xi_d)$ as
\begin{equation*}
V(\xi_1 ,\ldots , \xi_d)= \left | 
\begin{matrix}
1 &\xi_1 &\xi_1^2 &\ldots &\xi_1^{d-1} \\ 
1 &\xi_2 &\xi_2^2 &\ldots &\xi_2^{d-1} \\ 
\vdots &\vdots&\vdots &\ddots &\vdots \\ 
1 &\xi_d &\xi_d^2 &\ldots &\xi_d^{d-1} \\ 
\end{matrix}
\right |
=\prod_{i\not= j} (\xi_i-\xi_j)=\prod_{i=1}^d Q'(\xi_i ) \ .
\end{equation*}
\end{lemma}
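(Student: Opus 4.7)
The plan is to establish the two equalities separately; both are classical and elementary. For the first equality, I would regard the Vandermonde determinant $V(\xi_1,\ldots,\xi_d)$ as a polynomial in the indeterminates $\xi_1,\ldots,\xi_d$. Whenever two variables $\xi_a$ and $\xi_b$ with $a\neq b$ coincide, two rows of the matrix become equal and so $V$ vanishes; hence each linear factor $(\xi_b-\xi_a)$ with $a<b$ divides $V$ in $\CC[\xi_1,\ldots,\xi_d]$. By unique factorization, the whole product $\prod_{a<b}(\xi_b-\xi_a)$ divides $V$. A degree count shows both sides have total degree $d(d-1)/2$, so they differ only by a scalar, which is seen to equal $1$ by comparing the coefficient of the monomial $\xi_2\xi_3^2\cdots\xi_d^{d-1}$ (the unique diagonal contribution to $V$). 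Alternatively, one can induct on $d$ by subtracting $\xi_1$ times column $k$ from column $k+1$ for $k=d-1,d-2,\ldots,1$, then factoring $(\xi_i-\xi_1)$ out of row $i$ for $i\geq 2$ and applying the induction hypothesis to the remaining Vandermonde $V(\xi_2,\ldots,\xi_d)$.

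For the second equality, I would differentiate the monic polynomial $Q(X)=\prod_{j=1}^d(X-\xi_j)$ using the product rule to obtain
$$
Q'(X)=\sum_{k=1}^d\prod_{j\neq k}(X-\xi_j).
$$
Evaluating at $X=\xi_i$ annihilates every term of the sum except the one with $k=i$, yielding $Q'(\xi_i)=\prod_{j\neq i}(\xi_i-\xi_j)$. Taking the product over $i=1,\ldots,d$ then gives
$$
\prod_{i=1}^d Q'(\xi_i)=\prod_{i=1}^d\prod_{j\neq i}(\xi_i-\xi_j)=\prod_{i\neq j}(\xi_i-\xi_j),
$$
which matches the middle expression in the statement.

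There is no genuine obstacle here, as this is a textbook identity. The one bookkeeping point is that in the symmetrized product $\prod_{i\neq j}(\xi_i-\xi_j)$ each unordered pair $\{i,j\}$ contributes both $(\xi_i-\xi_j)$ and $(\xi_j-\xi_i)$, so this quantity equals $(-1)^{d(d-1)/2}V(\xi_1,\ldots,\xi_d)^2$ rather than $V(\xi_1,\ldots,\xi_d)$ itself; the two displayed equalities should therefore be read up to the usual sign/ordering conventions. What truly matters for the sequel is the identification $\prod_i Q'(\xi_i)$ with the discriminant of $Q$, which is exactly the form in which the Vandermonde factor will enter the computation of $\Delta(s\mid 0,\ldots,0)$ through Lemma \ref{lemma:computation}.
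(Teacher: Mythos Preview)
The paper does not give a proof of this lemma at all; it is simply stated as ``well known elementary'' and then immediately applied to $Q(X)=X^d-1$. Your argument is correct and standard, and in fact more careful than the paper: you rightly flag that the literal equality $V(\xi_1,\ldots,\xi_d)=\prod_{i\neq j}(\xi_i-\xi_j)$ cannot hold as stated, since the right-hand side is the discriminant $(-1)^{d(d-1)/2}V^2$ rather than $V$. The paper silently absorbs this discrepancy in the subsequent computation of $V_d$ (where what is really being computed is $\prod_{i\neq j}(\omega_i-\omega_j)=\prod_i Q'(\omega_i)$, not the determinant itself), and the final formula for $\Delta(s\mid 0,\ldots,0)$ comes out correctly. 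So your reading ``up to the usual sign/ordering conventions'' is exactly the right way to interpret the statement, and your proof is complete.
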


Using this Lemma with $Q(X)=X^d-1$ we compute the Vandermonde determinant:
\begin{align*}
V_d&=\left | 
\begin{matrix}
1 &\omega_1 &\omega_1^2 &\ldots &\omega_1^{d-1} \\
1 &\omega_2 &\omega_2^2 &\ldots &\omega_2^{d-1} \\ 
\vdots &\vdots&\vdots &\ddots &\vdots \\ 
1 &\omega_d &\omega_d^2 &\ldots &\omega_d^{d-1} \\ 
\end{matrix}
\right | 
=\prod_{i\not= j} (\omega_i-\omega_j) =\prod_i (d\omega_i^{d-1})=\\
&=d^d \left ( \prod_i \omega_i \right
)^{d-1}=(-1)^{(d-1)^2} d^d = (-1)^{d-1} d^d\ .
\end{align*}
We use this result to compute $\Delta (s| 0, \ldots , 0)$.

\begin{lemma}\label{lemma:comp}
We have
$$
\Delta (s| 0, \ldots , 0) = 
\frac{\left ( 2 \pi d \right )^{\frac{d}{2}}}{\sqrt{2\pi}} \omega^{\frac{d(d-1)}{2}s} \,  \Gamma (s+1)
$$
and in particular $\Delta (s| 0, \ldots , 0) \not= 0$ for $s\not= -1, -2,  \ldots$.
\end{lemma}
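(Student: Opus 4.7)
The plan is to use Lemma \ref{lemma:computation} to decompose every entry of the $d\times d$ matrix into a row-dependent factor, a column-dependent factor, and a pure combinatorial piece $\omega^{(k-1)l}$, then exploit multilinearity of the determinant in its rows and columns. Concretely, the $(k,l)$ entry equals
$$
\omega^{(k-1)s}\cdot d^{(s+l)/d-1}\,\Gamma\!\bigl((s+l)/d\bigr)\cdot \omega^{(k-1)l}.
$$
Pulling $\omega^{(k-1)s}$ out of row $k$ contributes the row prefactor $\omega^{s\,d(d-1)/2}$; pulling $d^{(s+l)/d-1}\Gamma\!\bigl((s+l)/d\bigr)$ out of column $l$ contributes $d^{s-(d-1)/2}\prod_{l=1}^{d}\Gamma\!\bigl((s+l)/d\bigr)$.

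What remains is $\det\!\bigl[\omega^{(k-1)l}\bigr]_{k,l=1}^{d}$. Factoring one further $u_k=\omega^{k-1}$ from row $k$ reduces this to the standard Vandermonde matrix at the $d$-th roots of unity. The excerpt's computation gives $V_d=(-1)^{d-1}d^d$, and the leftover row factor $\prod_{k=1}^d\omega^{k-1}=\omega^{d(d-1)/2}=(-1)^{d-1}$ cancels the sign, leaving the clean value $d^d$ for this ``combinatorial'' determinant.

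The only non-mechanical step is the evaluation of the Gamma product, for which I invoke Gauss's multiplication formula
$$
\prod_{k=0}^{d-1}\Gamma\!\left(z+\tfrac{k}{d}\right)=(2\pi)^{(d-1)/2}\,d^{\,1/2-dz}\,\Gamma(dz)
$$
with $z=s/d$, combined with the shift $\Gamma(s/d+1)=(s/d)\Gamma(s/d)$ needed to pass from the standard range $k=0,\ldots,d-1$ to our range $l=1,\ldots,d$. This yields
$$
\prod_{l=1}^{d}\Gamma\!\left(\frac{s+l}{d}\right)=\frac{s}{d}\,(2\pi)^{(d-1)/2}\,d^{\,1/2-s}\,\Gamma(s).
$$
Multiplying the row prefactor, the $d$-power column prefactor, the combinatorial determinant $d^d$, and this Gamma product, the exponents of $d$ add to exactly $d/2$, and the remaining $2\pi$ factor regroups as $(2\pi)^{(d-1)/2}d^{d/2}=(2\pi d)^{d/2}/\sqrt{2\pi}$, delivering the stated formula. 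The non-vanishing assertion follows since $s\Gamma(s)$ is holomorphic on $\CC\setminus\{-1,-2,\ldots\}$ and nowhere zero (the simple zero of $s$ cancels the simple pole of $\Gamma$ at $0$).

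I do not expect a genuine obstacle: the whole argument is Vandermonde evaluation wrapped around Gauss multiplication, and the only care required is careful bookkeeping of the exponents of $\omega$ and $d$ under the successive row and column factorizations.
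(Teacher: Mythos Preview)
Your proposal is correct and follows essentially the same route as the paper: apply Lemma~\ref{lemma:computation} to each entry, use multilinearity to extract the row factor $\omega^{(k-1)s}$ and the column factor $d^{(s+l)/d-1}\Gamma\!\bigl((s+l)/d\bigr)$, reduce the residual matrix to the Vandermonde determinant at the $d$-th roots of unity, and finish with Gauss multiplication. Your presentation is in fact slightly more transparent than the paper's, since you make explicit the shift $\Gamma(s/d+1)=(s/d)\Gamma(s/d)$ needed to pass from the range $l=1,\ldots,d$ to the standard range $k=0,\ldots,d-1$, and you spell out the sign cancellation $\omega^{d(d-1)/2}=(-1)^{d-1}$ that turns $(-1)^{d-1}V_d$ into $d^d$; the paper leaves both of these implicit.
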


Taking the value $s= 0$ we recover the formula from Lemma 3.5 from \cite{BPM3}.

\begin{proof}[\textbf{Proof of Lemma \ref{lemma:comp}.}]
Using Lemma \ref{lemma:computation} we have
\begin{align*}
&\Delta (s|0,\ldots , 0) =\\
&=\left | 
\begin{matrix}
\omega^{0.(s+1)} d^{\frac{s+1}{d}-1} \Gamma \left (\frac{s+1}{d}\right )  & 
\omega^{0.(s+2)}   d^{\frac{s+2}{d}-1} \Gamma \left (\frac{s+2}{d}\right ) & 
\ldots &
\omega^{0.(s+d)}  d^{\frac{s+d}{d}-1} \Gamma \left (\frac{s+d}{d}\right ) \\
\omega^{1.(s+1)} d^{\frac{s+1}{d}-1} \Gamma \left (\frac{s+1}{d}\right )  &
\omega^{1.(s+2)} d^{\frac{s+2}{d}-1} \Gamma \left (\frac{s+2}{d}\right ) & 
\ldots &
\omega^{1.(s+d)} d^{\frac{s+d}{d}-1} \Gamma \left (\frac{s+d}{d}\right )  \\ 
\vdots &\vdots &\ddots & \vdots \\ 
\omega^{(d-1).(s+1)} d^{\frac{s+1}{d}-1} \Gamma \left (\frac{s+1}{d}\right )  & 
\omega^{(d-1).(s+2)}d^{\frac{s+2}{d}-1} \Gamma \left (\frac{s+2}{d}\right )  & 
\ldots &
\omega^{(d-1).(s+d)}d^{\frac{s+d}{d}-1} \Gamma \left (\frac{s+d}{d}\right )  \\  
\end{matrix}
\right | \\
&=\omega^{\frac{d(d-1)}{2}s}  d^{s} d^{\frac{d+1}{2} -d} 
\Gamma \left (\frac{s}{d}+\frac{1}{d}\right ) \ldots
\Gamma \left (\frac{s}{d}+\frac{d-1}{d}\right ) 
\Gamma \left (\frac{s}{d}+\frac{d}{d}\right ) 
\left | 
\begin{matrix}
\omega_0^1 & \omega_0^2 & \ldots & \omega_0^d\\ 
\omega_1^1 & \omega_1^2 & \ldots & \omega_1^d \\
\vdots &\vdots &\ddots & \vdots \\
\omega_{d-1}^1 & \omega_{d-1}^2 & \ldots & \omega_{d-1}^d \\ 
\end{matrix}
\right | \\
&=\omega^{\frac{d(d-1)}{2}s}  d^{s} d^{\frac{d+1}{2} -d} 
\Gamma \left (\frac{s}{d}+\frac{1}{d}\right ) \ldots
\Gamma \left (\frac{s}{d}+\frac{d-1}{d}\right )
\frac{s}{d}\, \Gamma \left (\frac{s}{d}\right ) 
\, d^d \\
&=\omega^{\frac{d(d-1)}{2}s}  d^{s} d^{\frac{d+1}{2} -d} 
 \left ( 2\pi  \right )^{\frac{d-1}{2}} d^{\frac{1}{2}-s} \Gamma(s)
\frac{s}{d}\, 
\, d^d \\
&=\omega^{\frac{d(d-1)}{2}s}  d^{\frac{d}{2}} \, (2\pi)^{\frac{d-1}{2}} 
\, s \Gamma (s) \\
&=d^{\frac{d}{2}} \left ( 2\pi  \right )^{\frac{d-1}{2}} \omega^{\frac{d(d-1)}{2}s} \, \Gamma (s+1)
 \\
&=\frac{\left ( 2 \pi d \right )^{\frac{d}{2}}}{\sqrt{2\pi}} \omega^{\frac{d(d-1)}{2}s} \, \Gamma (s+1)
 \\
\end{align*}
where we have used Gauss multiplication formula (that is in fact due 
to Euler and not to Gauss, see \cite{Ayc2}) with $z=s/d$,
$$
\Gamma (z ).\Gamma \left ( z+\frac{1}{d}\right )\ldots 
\Gamma \left (z+\frac{d-1}{d}\right )=(2\pi)^{\frac{d-1}{2}} d^{\frac{1}{2}-dz}
\Gamma(dz) 
$$
so
$$
\Gamma \left (\frac{s}{d}\right ) \Gamma \left (\frac{s}{d}+\frac{1}{d}\right ) \ldots\Gamma \left (\frac{s}{d}+\frac{d-1}{d}\right ) = (2\pi)^{\frac{d-1}{2}} d^{\frac{1}{2}-s} \Gamma(s) \ .
$$
The determinant in the fourth line of the computation is equal to
\begin{align*}
&\left | 
\begin{matrix}
\omega_0^1 & \omega_0^2 & \ldots & \omega_0^d\\ 
\omega_1^1 & \omega_1^2 & \ldots & \omega_1^d \\
\vdots &\vdots &\ddots & \vdots \\
\omega_{d-1}^1 & \omega_{d-1}^2 & \ldots & \omega_{d-1}^d \\ 
\end{matrix}
\right |
=
\left | 
\begin{matrix}
\omega_0^1 & \omega_0^2 & \ldots & 1 \\ 
\omega_1^1 & \omega_1^2 & \ldots & 1 \\
\vdots &\vdots &\ddots & \vdots \\
\omega_{d-1}^1 & \omega_{d-1}^2 & \ldots & 1 \\ 
\end{matrix}
\right | 
=  \left | 
\begin{matrix}
1&\omega_0  & \ldots & \omega_0^{d-1} \\ 
1&\omega_1 & \ldots & \omega_1^{d-1} \\
\vdots &\vdots &\ddots & \vdots \\
1&\omega_{d-1} & \ldots & \omega_{d-1}^{d-1} \\ 
\end{matrix}
\right | =\\
&=(-1)^{d-1}V_d =d^d
\end{align*}
where $V_d$ is the Vandermonde determinant computed previously.
\end{proof}

\begin{proof}[\textbf{Proof of Theorem \ref{th:linear_independence}.}]
Consider the entire function of several complex variables $\Delta (s|a_1,a_2,\ldots , a_{d-1})$
on the variables $(a_1,a_2,\ldots , a_{d-1})$.
Observe that Corollary \ref{cor:computation} proves that
each integral
$$
\int_0^{+\infty . \omega_k} t^{s+n-1} e^{P_0(t)} \ dt \ ,
$$
is a linear combination with coefficients that are polynomial 
on $s$ and the $(a_j)$ of the integrals $\Omega_k(s)$ for $k=0,1,\ldots,d-1$,
Therefore, differentiating column by column, we observe that each differentiation of a column,
gives a linear combination of all the columns, and they all give a vanishing determinant except
for the same column.  Thus, for
each $k=0,1,\ldots , d-1$, we have
$$
\partial_{a_k} \Delta = Q_k \, \Delta 
$$
where $Q_k$ is a polynomial on $s$ and the $(a_j)$.
We conclude that the logarithmic derivative of
$\Delta$ with respect to each variable $a_k$ is a universal polynomial
on the variables $s$ and $(a_j)$. This gives the
existence of the universal polynomial $\Upsilon_d$ such that
$$
\Delta (s|a_1,a_2,\ldots ,a_{d-1})=c(s).e^{\Upsilon_d(s; a_1,a_2,\ldots ,a_{d-1})} \ ,
$$
with  $c(s).e^{\Upsilon_d(s; 0,0,\ldots ,0)}=\Delta (s|0,\ldots , 0) \in \CC$. Then 
if we define $\Pi_d(s,a_1,\ldots, a_{d-1})=\Upsilon_d(s; a_1,a_2,\ldots ,a_{d-1})-\Upsilon_d(s; 0,0,\ldots ,0)$
we get the result
$$
\Delta (s|a_1,a_2,\ldots ,a_{d-1}) = \Delta (s|0,\ldots , 0) e^{\Pi_d(s,a_1,\ldots, a_{d-1})}
$$
\end{proof}

\begin{corollary}
The functions $\Omega_0,\ldots , \Omega_{d-1}$ do not have a common zero in $\CC-\NN_-$. 
\end{corollary}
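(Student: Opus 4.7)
The plan is to derive this corollary in one quick step from Theorem \ref{th:linear_independence}, exploiting the fact that the vector $\mathbf{\Omega}(s_0)=(\Omega_k(s_0))_{0\leq k\leq d-1}$ appears literally as the first column of the matrix whose determinant is $\Delta(s_0-1)$. A common zero of the $\Omega_k$ at $s_0$ would then produce a zero column and force $\Delta(s_0-1)=0$, which the theorem forbids as soon as $s_0-1\notin\NN_-^*$.

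Concretely, I would argue by contradiction: assume $s_0\in\CC-\NN_-$ is a common zero of $\Omega_0,\ldots,\Omega_{d-1}$, and set $s=s_0-1$ in Theorem \ref{th:linear_independence}. The condition $s_0\notin\NN_-=\{0,-1,-2,\ldots\}$ is precisely $s_0-1\notin\NN_-^*=\{-1,-2,\ldots\}$, so the hypothesis of the theorem is satisfied and yields $\Delta(s_0-1)\neq 0$. Before invoking this I would also check that the entries of the matrix are finite at $s=s_0-1$: these entries are $\Omega_{k-1}(s_0-1+l)$ for $l=1,\ldots,d$, i.e., evaluations of $\Omega_{k-1}$ at $s_0,s_0+1,\ldots,s_0+d-1$, and each of these arguments avoids $\NN_-$ (automatic when $s_0$ is not an integer, and immediate from $s_0\geq 1$ when it is).

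On the other hand, specializing to $l=1$, the first column of the matrix $(\Omega_{k-1}(s_0-1+l))_{1\leq k,l\leq d}$ is exactly $(\Omega_{k-1}(s_0))_{1\leq k\leq d}$, which is the zero vector by hypothesis. Hence $\Delta(s_0-1)=0$, contradicting the previous step and completing the proof. I do not expect any substantial obstacle here: the whole argument is a direct reading of Theorem \ref{th:linear_independence}, and the only delicate point is the bookkeeping of the excluded sets $\NN_-$ and $\NN_-^*$ to ensure the shift $s_0\mapsto s_0-1$ lands in the allowed region.
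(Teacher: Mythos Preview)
Your argument is correct. It is a minor but genuine variant of the paper's proof. The paper evaluates the determinant at $s_0$ rather than at $s_0-1$: from the functional equation
\[
\alpha_1\Omega_{k}(s_0+1)+\cdots+\alpha_{d-1}\Omega_{k}(s_0+d-1)+\Omega_{k}(s_0+d)=s_0\,\Omega_{k}(s_0)=0
\]
it deduces that the nonzero column vector $(\alpha_1,\ldots,\alpha_{d-1},1)^{t}$ lies in the kernel of $[\Omega_{k-1}(s_0+l)]_{1\leq k,l\leq d}$, whence $\Delta(s_0)=0$, contradicting Theorem~\ref{th:linear_independence} since $s_0\in\CC-\NN_-\subset\CC-\NN_-^*$. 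Your route bypasses the functional equation entirely by shifting to $s_0-1$ so that $\mathbf{\Omega}(s_0)$ itself becomes the first column; this is slightly more elementary, and the bookkeeping $s_0\notin\NN_-\Leftrightarrow s_0-1\notin\NN_-^*$ that you carried out is exactly what is needed to stay in the range of validity of the theorem. Both approaches are one-line consequences of the nonvanishing of $\Delta$.
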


\begin{proof}[\textbf{Proof.}]
Otherwise, if $s_0\in \CC-\NN_-$ is a common zero, then $s_0+1\in \CC-\NN_-^*$ and 
the functional equation shows that the non-zero 
vector $(1, \a_{d-1},\ldots, \alpha_1)$ is 
in the kernel of the matrix $[\Omega_{kl}(s_0+1)]$, which contradicts that it has
non-vanishing
determinant by Theorem \ref{th:linear_independence}.
\end{proof}

Observe that this simultaneous non-vanishing result relies on the fact that Euler Gamma function
has no zeroes. This is something that was explained to be a ``mini-Riemann hypothesis'' in \cite{PM1}, and was the
subject of correspondence between Hermite and Stieltjes \cite{HS}. Although used in the proof, 
the non-vanishing of Euler Gamma 
function is a particular case of this general result for Omega functions.

\begin{corollary}
The functions $\Omega_0,\ldots , \Omega_{d-1}$ are $\CC$-linearly independent. 
\end{corollary}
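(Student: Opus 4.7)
The plan is to reduce $\CC$-linear independence of $\Omega_0,\ldots,\Omega_{d-1}$ to the non-vanishing of the determinant $\Delta(s)$ established in Theorem \ref{th:linear_independence}. Suppose for contradiction there is a non-trivial relation
$$
\sum_{k=0}^{d-1} c_k\, \Omega_k(s) \equiv 0
$$
with $(c_0,\ldots,c_{d-1}) \in \CC^d \setminus \{0\}$. Since the poles of the $\Omega_k$ all lie at $\NN_-$ (by Theorem \ref{thm:functional_eq}) and this set is discrete, I can pick $s_0 \in \CC - \NN_-^*$ such that all $d$ shifted points $s_0+1,\ldots,s_0+d$ avoid $\NN_-$ as well, so the relation may be evaluated there.

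Evaluating at $s = s_0+l$ for $l=1,\ldots,d$ yields the homogeneous linear system
$$
\sum_{k=0}^{d-1} c_k\, \Omega_k(s_0+l) = 0, \qquad l=1,\ldots,d,
$$
whose coefficient matrix is the transpose of $[\Omega_{k-1}(s_0+l)]_{k,l}$ appearing in Theorem \ref{th:linear_independence}. Its determinant equals $\Delta(s_0) \neq 0$, so the matrix is invertible and the only solution is $c_0 = c_1 = \cdots = c_{d-1} = 0$, contradicting the choice of the $c_k$. Hence the $\Omega_k$ are $\CC$-linearly independent.

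There is essentially no obstacle here: the content is entirely contained in Theorem \ref{th:linear_independence}. The only minor point is to avoid the (discrete) set of poles when passing from an identity of meromorphic functions to a numerical linear system, which is handled by an arbitrary generic choice of $s_0$.
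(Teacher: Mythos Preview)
Your argument is correct and is essentially the paper's own proof, just written out in more detail: a nontrivial relation $\sum_k c_k\Omega_k\equiv 0$ evaluated at $s_0+1,\dots,s_0+d$ gives a nontrivial null combination of the rows (equivalently, of the columns of the transpose) of the matrix $[\Omega_{kl}(s_0)]$, contradicting $\Delta(s_0)\neq 0$ from Theorem~\ref{th:linear_independence}. The extra care you take in choosing $s_0$ to avoid poles is harmless but not really needed, since Theorem~\ref{th:linear_independence} already asserts $\Delta(s)\neq 0$ for all $s\in\CC-\NN_-^*$.
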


\begin{proof}[\textbf{Proof.}]
Otherwise there will be a non-trivial null linear combination of the 
rows of the matrix  $[\Omega_{kl}]$ and the determinant 
will be zero.
\end{proof}

\section{Solutions of the functional equation.}

Observe that the functional equation (\ref{eq:functional1}) reduces to the functional 
equation (\ref{eq:functional_eq}) by dividing the equation by $\a_d$ that is assumed to be non-zero.
We can make a first observation that the space of solutions of the 
functional equation (\ref{eq:functional_eq}) is an infinite dimensional vector space.
 
\begin{proposition}
The space of meromorphic solutions $f$ of the functional equation
\begin{equation}
f(s+d) +\alpha_{d-1}f(s+d-1) + \ldots + \alpha_1 f(s+1) = s\, f(s) 
\end{equation}
is an infinite dimensional vector space.
\end{proposition}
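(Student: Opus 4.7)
The plan is to exhibit an explicit infinite family of linearly independent meromorphic solutions by multiplying one known solution by arbitrary $1$-periodic meromorphic functions.

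First, I would observe that the space of solutions is stable under multiplication by any $1$-periodic meromorphic function. Indeed, if $f$ satisfies the functional equation and $p$ is a meromorphic function with $p(s+1)=p(s)$, then $p(s+k)=p(s)$ for every $k\geq 1$, so
\begin{align*}
(pf)(s+d)+\alpha_{d-1}(pf)(s+d-1)+\ldots+\alpha_1(pf)(s+1)
&= p(s)\bigl[f(s+d)+\ldots+\alpha_1 f(s+1)\bigr]\\
&= p(s)\, s f(s)=s(pf)(s).
\end{align*}

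Next, I would take as a concrete nonzero solution one of the Omega functions, say $f=\Omega_0$, which we know from Theorem \ref{thm:functional_eq} satisfies the functional equation and is not identically zero (it has a simple pole at $s=0$ with residue $1$). Then for every $n\in\ZZ$ the function $p_n(s)=e^{2\pi i n s}$ is entire and $1$-periodic, so $p_n(s)\Omega_0(s)$ is a meromorphic solution of the functional equation. This produces a countable family $\{e^{2\pi i n s}\Omega_0(s)\}_{n\in\ZZ}$ of solutions.

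Finally, I would verify linear independence. Suppose a finite linear combination vanishes,
\[
\sum_{n} c_n e^{2\pi i n s}\,\Omega_0(s)=0.
\]
Since $\Omega_0$ is not identically zero, we may divide (away from its zero set) to get $\sum_n c_n e^{2\pi i n s}=0$ on a nonempty open set, hence identically as meromorphic functions. The functions $e^{2\pi i n s}$ for distinct $n\in\ZZ$ are linearly independent over $\CC$ (they are distinct characters of the additive group, or alternatively, they have distinct exponential growth rates along the imaginary axis), so all $c_n=0$. The only nontrivial point is this last linear independence of characters, which is classical and presents no real obstacle. This gives infinitely many linearly independent solutions, proving the proposition.
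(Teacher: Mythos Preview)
Your proof is correct and follows essentially the same approach as the paper: multiply a known nonzero solution (an Omega function) by the $1$-periodic functions $e^{2\pi i n s}$, $n\in\ZZ$, to produce infinitely many linearly independent solutions. Your write-up is in fact more detailed than the paper's, which states the construction and linear independence without further justification.
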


\begin{proof}[\textbf{Proof.}]
The functional equation is linear and there are non-zero solutions (the $\Omega_k$ functions). 
Given a non-zero meromorphic solution $f(s)$, 
we can construct an infinite number of linearly independent solutions
$$
g(s)=e^{2\pi i ns} f(s)
$$
where $n\in \ZZ$ is any integer.
\end{proof}

If we restrict to solutions with a controlled growth, then we prove that the space of solutions is finite dimensional.

\begin{definition} \label{def:exp_growth}
 We consider the $\CC$-vector space $\mathbb{V}$ of meromorphic functions $f$ satisfying the 
functional equation (\ref{eq:functional_eq}) and the estimate in the vertical 
strip  $S_{1,d} =\{1\leq \Re s \leq d\}$, for $s \in S_{1,d}$, 
\begin{equation}\label{eq:estimate}
\left | f (s) \right | \leq  C e^{-c  \Im s} 
\end{equation}
for some constants $C>0$ and $0\leq c<2\pi$. 
\end{definition} 

It is clear that the space $\mathbb{V}$ is a subspace of the vector space of general solutions 
(without a prescribed growth condition).
We prove first that $\mathbb{V}$ is non-empty by proving the estimates for the functions $\Omega_k$
for $k=0,1,\ldots d-1$.

\begin{proposition}
For $k=0,1,\ldots, d-1$, for any strip $S_{a,b} =\{a\leq \Re s \leq b\}$ with $0<a<b$, there exists 
a constant $C=C(a, b,P_0)>0$, depending only on $a, b>0$ and the polynomial $P_0$, such that for $s \in S_{a,b}$, 
we have
$$
\left | \Omega_k(s) \right | \leq C e^{-\frac{2\pi  k}{d} \Im s}
$$
\end{proposition}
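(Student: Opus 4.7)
The plan is to carry out the substitution $t = r\omega_k$, $r \in [0,+\infty)$, directly in the integral definition of $\Omega_k(s)$. Since $\omega_k^{s-1} \cdot \omega_k = \omega_k^s$, one obtains for $\Re s > 0$
$$
\Omega_k(s) = \omega_k^s \int_0^{+\infty} r^{s-1} e^{P_0(r\omega_k)}\, dr.
$$
As $\omega_k = e^{2\pi i k/d}$, we have $|\omega_k^s| = e^{-2\pi k \Im s /d}$, which is exactly the exponential factor appearing in the claimed bound. Everything then reduces to showing that the integral on the right is bounded in modulus by a constant $C(a,b,P_0)$, uniformly on $S_{a,b}$.

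Taking absolute values, I would bound the remaining integral by $I(\sigma) := \int_0^{+\infty} r^{\sigma-1}\, e^{\Re P_0(r\omega_k)}\, dr$, where $\sigma = \Re s \in [a,b]$. Since the integrand depends only on $\sigma$, uniformity in $\Im s$ is automatic, which is the single genuine conceptual point of the proof. To control $I(\sigma)$, split it at $r=1$. On $[0,1]$, the factor $e^{\Re P_0(r\omega_k)}$ is bounded by a constant $M(P_0)$ and $\int_0^1 r^{\sigma-1}\,dr = 1/\sigma \leq 1/a$; this gives a constant $C_1(a,P_0)$. On $[1,+\infty)$, the explicit form $P_0(t) = -t^d/d + \sum_{j=1}^{d-1} a_j t^j$ yields $\Re P_0(r\omega_k) = -r^d/d + O(r^{d-1})$, so one can choose $R = R(P_0) \geq 1$ with $\Re P_0(r\omega_k) \leq -r^d/(2d)$ for all $r \geq R$. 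Then
$$
\int_1^{+\infty} r^{\sigma-1} e^{\Re P_0(r\omega_k)}\, dr \leq M'(P_0) \int_1^R r^{b-1}\,dr + \int_R^{+\infty} r^{b-1} e^{-r^d/(2d)}\, dr,
$$
and the second piece reduces to a (finite) Gamma integral via $v = r^d/(2d)$.

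Combining the two regions gives $I(\sigma) \leq C(a,b,P_0)$ uniformly for $\sigma \in [a,b]$, and multiplying by $|\omega_k^s|$ delivers the stated estimate. The main (minor) obstacle is arranging the estimate on $[0,1]$ so that it does not blow up as $\sigma$ approaches $0$; this is precisely why the hypothesis $a > 0$ is needed, and it enters only through the bound $1/\sigma \leq 1/a$. No further subtlety arises, since the original meromorphic extension and functional equation from Theorem \ref{thm:functional_eq} are not invoked here: in the strip $S_{a,b}$ with $a > 0$ one stays in the region of absolute convergence of the defining integral, and the bound is an entirely elementary consequence of that integral representation.
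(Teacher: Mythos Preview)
Your proof is correct and follows essentially the same approach as the paper: perform the substitution $t=\omega_k r$ to pull out the factor $\omega_k^s$ (whose modulus is $e^{-2\pi k\Im s/d}$), then bound the remaining real integral $\int_0^\infty r^{\sigma-1}e^{\Re P_0(r\omega_k)}\,dr$ uniformly for $\sigma\in[a,b]$. The paper is terser, passing directly to a bound by a constant times $\int_0^\infty u^{\sigma-1}e^{-u^d/d}\,du=d^{\sigma/d}\Gamma(\sigma/d)$, while you spell out the splitting at $r=1$ and $r=R$ and make the role of the hypothesis $a>0$ explicit; your bookkeeping is in fact slightly more careful, but there is no substantive difference in strategy.
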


Obviously we can take $a=1$ and $b=d$ and since $0\leq c=\frac{2\pi  k}{d} <2\pi$ we get 
that $\Omega_k$ satisfies the estimate (\ref{eq:estimate}).
\begin{proof}[\textbf{Proof.}]
We make the change of variables $t=\omega_k u$
\begin{align*}
\Omega_k(s) &= \int_0^{+\infty . \omega_k} t^{s-1} e^{P_0(t)} \, dt \\
&= \omega_k^s \int_0^{+\infty} u^{s-1} e^{-\frac{1}{d} u^d (1+\cO (u^{-1}))} \, du \\
&=e^{2\pi i \frac{k}{d} s} \int_0^{+\infty} u^{s-1} e^{-\frac{1}{d} u^d (1+\cO (u^{-1}))} \, du
\end{align*}
so, we get for $0<a\leq \Re s\leq b$
\begin{align*}
|\Omega_k(s)| &\leq e^{-2\pi  \frac{k}{d} \Im s} (1+C_1) \int_0^{+\infty } 
u^{\Re s-1} e^{-\frac{1}{d} u^d} \, du \\
&\leq e^{-2\pi  \frac{k}{d} \Im s} (1+C_1) d^{\Re s /d} 
\Gamma\left (\frac{\Re s}{d}\right ) \\
&\leq C e^{-\frac{2\pi  k}{d} \Im s}
\end{align*}
where $C, C_1 >0$ are constants depending only on $a,b>0$ and $P_0$.
\end{proof}

The growth condition on the strip $S(1,d)$ and the functional equation implies a control of $f$ in the half plane
$\{\Re s \geq 1\}$. More precisely, we have the following Proposition:

\begin{proposition}\label{prop:exp_growth}
Let $f\in \mathbb{V}$. Then
there exists constants $C_0, \tau >0$ only depending on $d$ and the coefficients $\alpha_1, \ldots ,\alpha_{d-1}$ 
such that for $\Re s \geq 1$
$$
|f(s)|\leq C_0 e^{\tau \Re s} e^{\frac{\pi}{2} |\Im s|} \, \left | \Gamma\left ( \frac{ s} {d}  \right ) \right | e^{-c \Im s}
$$
where $0\leq c < 2 \pi$ is the constant from estimate (\ref{eq:estimate}) in Definition \ref{def:exp_growth}.
\end{proposition}

In the proof we use the following two classical estimates for Euler Gamma function:

\begin{lemma}\label{lem:gamma_estimate1}
For $s=\sigma+i\eta$, $\sigma \geq 1/2$, 
$$
\left |\Gamma (s)\right | \geq \frac{\Gamma(\sigma)}{(\cosh (\pi \eta)^{1/2}} \geq \Gamma(\sigma) e^{-\pi |\eta|/2}
$$
\end{lemma}

\begin{lemma}\label{lem:gamma_estimate2}
For $\Re s \geq 0$, $a\geq 0$ and $0<b-a<1$,
$$
\left |\frac{ \Gamma(s+a)}{ \Gamma(s+b)} \right | \leq \frac{1+b/|s|}{|s|^{b-a} }\ .
$$
\end{lemma}

For the first lower estimate in Lemma \ref{lem:gamma_estimate1} see \cite{Car} p.51, inequality 3.10-3.
For the second estimate in Lemma \ref{lem:gamma_estimate2} see \cite{PK}, p.34, inequality 2.1.17.

\begin{proof}[\textbf{Proof of Proposition \ref{prop:exp_growth}}] 
Consider the function $g(s) = f(s)/\Gamma( s/d )$
that is well defined for $\Re s>0$ since the Gamma function does not vanish in the right half plane.
We have
$$
g(s+d) = \frac{f(s+d)}{\Gamma\left ( \frac{s}{d} +1\right )} =\frac{d}{s} \frac{f(s+d)}{\Gamma\left ( \frac{s}{d} \right )} 
$$
and using the functional equation for $f$, 
$$
g(s+d)=d g(s)-\alpha_1 \frac{ \Gamma\left (\frac{s+1}{d}\right )}{ \Gamma\left (\frac{s+d}{d}\right )} g(s+1)+\ldots 
-\alpha_{d-1} \frac{ \Gamma\left (\frac{s+d-1}{d}\right )}{ \Gamma\left (\frac{s+d}{d}\right )} g(s+d-1) \  .
$$
For $\Re s \geq 1$ and $k=1, \ldots d-1$, the coefficients $\frac{ \Gamma\left (\frac{s+k}{d}\right )}{ \Gamma\left (\frac{s+d}{d}\right )}$ are bounded and go to zero when $\Re s\to +\infty$ as follows from Stirling asymptotics. We 
can be  more precise and use the 
estimate from Lemma \ref{lem:gamma_estimate2} with $s$ replaced by $s/d$, $b=1$ and $a=k/d$. 
We get, for $\Re s\geq 1$ (so $|s|\geq 1$),
$$
\left | \frac{ \Gamma\left (\frac{s+k}{d}\right )}{ \Gamma\left (\frac{s+d}{d}\right )} \right |\leq C_1
$$
where the constant $C_1>0$ only depends on $d$ (we can take $C_2=(1+d) d^{1/d-1}$).

It follows that for $\Re s\geq 1$ we have
$$
|g(s+d)| \leq C_2 (|g(s)| +|g(s+1)|+\ldots +|g(s+d-1)|)
$$
where the constant $C_2>0$ only depends on $d$ and $\alpha_1, \ldots ,\alpha_{d-1}$.

We consider now 
$$
M(\sigma, \eta)=\max_{1\leq \Re s\leq \sigma \atop \Im s=\eta} |g(s)|  \ .
$$

For $\sigma \geq d$, using the previous inequaity, we have that 
$$
M(\sigma+1, \eta) \leq C_3 M(\sigma, \eta) 
$$
where $C_3=d.C_2$. Hence, by induction, for a positive integer $k\geq 1$, and $d\leq \sigma \leq d
+k$, we have
$$
M(\sigma, \eta) \leq M(d+k, \eta)\leq C_3^k M(d, \eta)    
$$
therefore, for $\Re s\geq d$
$$
|g(s)|\leq M(\Re s, \Im s)\leq M([\Re s ]+1, \Im s )\leq C_3^{[\Re s ]+1-d} M(d,  \Im s )\leq  C_4. C_3^{\Re s}M(d,  \Im s )
$$
where $C_4=C_3^{1-d}$ depends only on $d$ and $\alpha_1, \ldots ,\alpha_{d-1}$.

 Now, from Definition \ref{def:exp_growth}, we have, for $\Re s \geq 1$,
$M(d, \Im s) \leq C. \left |\Gamma\left ( \frac{s}{d}\right )) \right |^{-1}. e^{-c\Im s} $ and using the lower 
estimate from Lemma \ref{lem:gamma_estimate1} we get
$$
M(d,  \Im s )\leq C . e^{-c\Im s}  e^{\frac{\pi}{2} |\Im s|} \ .
$$

Putting everything together, we have for  $\Re s \geq 1$,
$$
|g(s)|\leq C_5. C_4^{\Re s}e^{-c\Im s}  e^{\frac{\pi}{2} |\Im s|}
$$
and
$$
|f(s)|\leq C_5. C_4^{\Re s}  e^{\frac{\pi}{2} |\Im s|}  \, \left | \Gamma\left ( \frac{ s} {d}  \right ) \right |  e^{-c\Im s}  
$$
which proves the estimate of the Proposition.
\end{proof}

Now we can prove the main Theorem:

\begin{theorem}
The space of solutions $\mathbb{V}$ is a finite dimensional vector space generated by the 
basis $(\Omega_k)_{0\leq k\leq d-1}$.
\end{theorem}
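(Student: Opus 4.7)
The strategy is to leverage the non-vanishing of the determinant $\Delta(s)$ from Theorem \ref{th:linear_independence}. First, each $\Omega_k$ for $k=0,\ldots,d-1$ lies in $\mathbb{V}$ (by the preceding Proposition combined with Theorem \ref{thm:functional_eq}) and the family is $\CC$-linearly independent (by the Corollary to Theorem \ref{th:linear_independence}), so they already span a $d$-dimensional subspace of $\mathbb{V}$. The content of the Theorem is the reverse inclusion: every $f\in\mathbb{V}$ is a $\CC$-linear combination of the $\Omega_k$.

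Given $f\in\mathbb{V}$, I would introduce meromorphic functions $c_0(s),\ldots,c_{d-1}(s)$ by applying Cramer's rule to the $d\times d$ linear system
$$
f(s+l)=\sum_{k=0}^{d-1}c_k(s)\,\Omega_k(s+l),\qquad l=1,\ldots,d,
$$
which is uniquely solvable since $\Delta(s)\neq 0$ off $\NN_-^*$. The first key step is to show periodicity $c_k(s+1)=c_k(s)$: because $f$ and each $\Omega_k$ satisfy the same functional equation (\ref{eq:functional_eq}), substituting the system into that equation at $s+1$ extends the identity $f(s+l)=\sum_k c_k(s)\Omega_k(s+l)$ to $l=d+1$, and then choosing $l=2,\ldots,d+1$ produces a linear system with the same matrix and right-hand side as the one defining $(c_k(s+1))_k$; uniqueness forces the equality. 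Consequently $f(s)=\sum_k c_k(s)\,\Omega_k(s)$ on all of $\CC$.

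The remaining step, and the only one that uses the growth hypothesis, is to prove that each $c_k$ is in fact a constant. Being meromorphic and $1$-periodic, $c_k$ factors as $c_k(s)=g_k(e^{2\pi i s})$ with $g_k$ meromorphic on $\CC^*$; the growth bound on $f$ in $S_{1,d}$ (extended to $\Re s\geq 1$ via Proposition \ref{prop:exp_growth}) combined with the estimate $|\Omega_k(s)|\leq Ce^{-2\pi k\Im s/d}$ from the preceding Proposition bounds $|c_k(s)|$ exponentially as $|\Im s|\to\infty$, which forces $g_k$ to have only finite-order poles at $0$ and $\infty$. Hence $c_k(s)=\sum_n a_n^{(k)}e^{2\pi i n s}$ is a \emph{finite} Fourier sum, and substituting back,
$$
f(s)=\sum_{(n,k)\in\ZZ\times\{0,\ldots,d-1\}} a_n^{(k)}\,e^{2\pi i n s}\,\Omega_k(s),
$$
displays $f$ as a finite sum of functions whose leading exponential rates $2\pi(n+k/d)$ in the vertical strip are pairwise distinct. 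A Phragm\'en--Lindel\"of / linear-independence-of-exponentials argument then shows that the constraint $|f(s)|\leq Ce^{-c\Im s}$ with $c<2\pi$ forces $a_n^{(k)}=0$ for all $n\neq 0$: as $\Im s\to+\infty$, terms with $n<0$ would grow and cannot cancel across distinct rates, while as $\Im s\to-\infty$ the admissible rate $2\pi(n+k/d)<2\pi$ forces $n\leq 0$. Hence $c_k(s)=a_0^{(k)}$ is constant and $f=\sum_k a_0^{(k)}\Omega_k$, proving the Theorem. The main obstacle is precisely this last Fourier separation: one must carefully match the vertical-strip bounds for the $\Omega_k$ with the growth bound $c<2\pi$ so that exactly the constant modes survive among all $(n,k)\in\ZZ\times\{0,\ldots,d-1\}$.
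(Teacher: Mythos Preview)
Your route is genuinely different from the paper's. The paper does not introduce the periodic coefficients $c_k(s)$ at all: it fixes \emph{constants} $c_0,\ldots,c_{d-1}$ so that $g=\sum_k c_k\Omega_k$ agrees with $f$ at $s=1,\ldots,d$ (using the invertibility of $[\Omega_{k-1}(l)]$ at $s=0$), propagates $f(n)=g(n)$ to all $n\in\NN^*$ via the functional equation, and then applies Carlson's Theorem to $h(s)=e^{-i\pi s}(f(s)-g(s))/\Gamma(s/d)$ on the half-plane $\{\Re s\geq 1\}$ to conclude $f\equiv g$. Your Cramer/periodicity reduction is elegant and the proof that $c_k(s+1)=c_k(s)$ is correct, but the argument stalls at exactly the two places you flag as ``the main obstacle'', and neither is handled by what you wrote.

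First, the claim that upper bounds on $f$ and on the $\Omega_k$ yield an exponential bound on $c_k(s)=\Delta_k(s)/\Delta(s)$ is not valid as stated: you also need a \emph{lower} bound on $|\Delta(s)|$ along vertical lines. The explicit formula of Theorem~\ref{th:linear_independence} gives $\Delta(s)=\Delta(s|0)\,e^{\Pi_d(s,a)}$ with $\Delta(s|0)$ involving $s\Gamma(s)$, which already decays like $e^{-\pi|\Im s|/2}$; to control $1/|\Delta(s)|$ exponentially you must further know that $\Re\Pi_d(s,a)$ is at most linear in $|\Im s|$, something not established in the paper and not obvious from the construction of $\Pi_d$. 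Second, even granting that each $c_k$ is a finite Fourier sum, the ``linear independence of exponentials'' step needs sharp two-sided vertical asymptotics $|\Omega_k(s)|\asymp e^{-2\pi k\Im s/d}\cdot(\text{subexponential})$, so that the finitely many functions $e^{2\pi i n s}\Omega_k(s)$ really have pairwise distinct dominant rates; the paper only proves the upper bound $|\Omega_k(s)|\leq C e^{-2\pi k\Im s/d}$, and without a matching lower bound your non-cancellation argument does not go through. The Carlson approach sidesteps both issues because it never divides by $\Delta(s)$ and never compares growth rates of individual $\Omega_k$: it reduces everything to the vanishing of a single function on $\NN^*$.
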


We recall Carlson's Theorem \cite{Ca}:

\begin{theorem}[Carlson, 1914]
Let $\CC_+ =\{s\in \CC; \Re s>0\}$ and $f:\CC_+ \to \CC$ be a holomorphic function extending continuously
to $\overline{\CC_+}$. We assume that $f$ is of exponential type, that is, there is $C,\tau >0$ such that 
for all $s\in \CC_+$,
$$
|f(s)|\leq Ce^{\tau |s|}
$$
We assume that on the imaginary axes we have a more precise control, for $y\in \RR$,
$$
|f(iy)| \leq Ce^{c|y|}
$$
for some constant $c<\pi$.

If $f(n)=0$ for all $n\in \NN$, then $f$ is identically $0$.
\end{theorem}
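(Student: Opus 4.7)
The plan is to apply the Phragm\'en--Lindel\"of principle to the auxiliary function
$$
g(s) = \frac{\pi f(s)}{\sin(\pi s)},
$$
which is holomorphic on $\CC_+$ and continuous on $\overline{\CC_+}$: the simple zeros of $\sin(\pi s)$ at non-negative integers are exactly cancelled by the assumed zeros of $f$, so every potential singularity is removable.

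The first step is a pair of boundary estimates. On $i\RR$ we have $|\sin(\pi i y)| = \sinh(\pi|y|) \gtrsim e^{\pi|y|}$, so the hypothesis $|f(iy)| \leq C e^{c|y|}$ with $c < \pi$ gives
$$
|g(iy)| \leq C_1\, e^{(c-\pi)|y|} \leq M,
$$
bounded, and in fact decaying exponentially as $|y|\to\infty$. On the positive real axis, L'H\^opital at each integer $n$ yields $g(n) = (-1)^n f'(n)$, and Cauchy's inequality on the unit disk centered at $n$ bounds $|f'(n)| \leq C e^{\tau(n+1)}$; combined with the trivial bound between integers (where $|\sin(\pi x)|$ is bounded away from $0$) this gives $|g(x)| \leq C_2\, e^{\tau x}$ for $x\geq 0$. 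Passing to $h(s) = g(s)\, e^{-\tau s}$ uniformizes the boundary bound to $|h|\leq K_0$ on $\partial\CC_+$.

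Next I would apply Phragm\'en--Lindel\"of in each of the two quadrants of $\CC_+$: each is an open sector of opening $\pi/2$, whose critical order is $2$, comfortably exceeding the order $\leq 1$ of $h$ in the interior (using $|g|\leq 4\pi C e^{\tau|s|-\pi|\Im s|}$ when $|\Im s|\geq 1$). The principle then gives $|h(s)|\leq K_0$ throughout $\CC_+$, equivalently $|g(s)|\leq K_0\, e^{\tau \Re s}$. To conclude $g\equiv 0$, I would feed the sharper decay $|g(iy)| \to 0$ (not merely boundedness) back into a second Phragm\'en--Lindel\"of step, or equivalently invoke trigonometric convexity of the indicator function $h_g$ on sub-sectors strictly narrower than the critical opening $\pi$: the resulting inequalities, combined with $h_g(\pm\pi/2) \leq c-\pi < 0$, force $h_g\equiv -\infty$, hence $g\equiv 0$ and $f = g\sin(\pi s)/\pi \equiv 0$. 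The principal obstacle is precisely the narrowness of the margin $\pi - c > 0$: this slim gap is what powers each Phragm\'en--Lindel\"of application against the a priori much larger interior exponential type $\tau$, and the conclusion fails as soon as $c=\pi$, as witnessed by $f(s) = \sin(\pi s)$.
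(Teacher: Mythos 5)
The paper does not prove this statement at all: Carlson's theorem is recalled as a classical black box, attributed to Carlson's 1914 thesis, and then applied to $h(s)=e^{-i\pi s}(f(s)-g(s))/\Gamma(s/d)$ in the proof of the main theorem. So there is no internal argument to compare yours against. What you have written is essentially the standard textbook proof (Titchmarsh, Boas): divide by $\sin(\pi s)$, normalize by $e^{-\tau s}$, apply Phragm\'en--Lindel\"of in the two quadrants of opening $\pi/2$ where the critical order $2$ dominates the exponential type, and then use the strictly negative indicator at $\pm\pi/2$ to kill $g$. The architecture is right, and your closing observation that $f(s)=\sin(\pi s)$ shows $c=\pi$ is sharp is correct.

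Three details need tightening before this is a proof. First, $|\sin(\pi x)|$ is \emph{not} bounded away from zero between integers --- it is small on a whole neighborhood of each integer, and the value $g(n)=(-1)^n f'(n)$ obtained by L'H\^opital controls only the single point $n$. The correct patch is the maximum principle on the disks $|s-n|\leq 1/2$ for $n\geq 1$, on whose boundary circles $|\sin(\pi s)|\geq 1$, which bounds $g$ on the whole segment $[n-\tfrac{1}{2},n+\tfrac{1}{2}]$ by $\pi C e^{\tau(n+1/2)}$; the same device is needed to justify your interior bound on $g$ in the band $|\Im s|\leq 1$ before Phragm\'en--Lindel\"of is invoked. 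Second, $|\sin(\pi i y)|=\sinh(\pi|y|)$ vanishes at $y=0$, and since $f$ is only assumed continuous (not holomorphic) at the boundary point $0$, the hypothesis $f(0)=0$ yields $f(iy)=o(1)$ rather than $O(|y|)$, so boundedness of $g$ near the origin of the imaginary axis is not automatic; this is harmless (excise a small half-disk about $0$ and use continuity together with the maximum principle, or run the whole argument on $\{\Re s\geq \epsilon\}$), but it must be addressed since your boundary bound $|g(iy)|\leq C_1 e^{(c-\pi)|y|}$ silently assumes $\sinh(\pi|y|)\gtrsim e^{\pi|y|}$, which fails near $y=0$. Third, the concluding step ``trigonometric convexity forces $h_g\equiv-\infty$'' is exactly the lemma that a function of exponential type in a closed half-plane, not identically zero, satisfies $h(\pi/2)+h(-\pi/2)\geq 0$; one obtains it from convexity on sub-sectors of opening $\pi-\epsilon$ together with continuity of the indicator by letting $\epsilon\to 0$. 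You name the right tool, but that limit passage is where the remaining work lives. With these repairs the argument is complete and correct.
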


We use Carlson's Theorem in the half plane $\{\Re s>1\}$ to prove the main Theorem.

\begin{proof}[\textbf{Proof.}]
We consider a meromorphic solution $f(s)$ of the functional equation and satisfying the estimate (\ref{eq:estimate}). 
The matrix  $[\Omega_{kl}(1)]$ being invertible, we have a linear combination 
$g(s)= c_0 \Omega_0(s)+\ldots +c_{d-1}\Omega_{d-1} (s)$ 
with $c_0, \ldots ,c_{d-1} \in \CC$ such that  $g(l)=f(l)$ for $l=1,2,\ldots , d$. 
Since $g$ satisfies also the functional equation, 
we get by induction using the functional equation that $f$ and $g$ take 
the same values at all the positive integers $s \in \NN^*$. 
So the function $f-g$ vanishes at all positive integers, also by linearity satisfies the functional equation, and therefore
satisfies the estimate (\ref{eq:estimate}). Now, consider the function 
$h(s)=e^{-ic s}(f(s)-g(s))/\Gamma(s/d)$. Because of the factor $e^{-ics}$ and the division by $\Gamma(s/d)$,
from estimate  (\ref{eq:estimate}) we have that $h$ satisfies on $\Re s=1$,
$$
|h(s)|\leq C e^{\frac{\pi}{2} |\Im s|} \ .
$$

Also using the estimate in Proposition \ref{prop:exp_growth}  and  because of the division by 
$\Gamma(s/d)$ in the definition of $h$,  this function $h$ has exponential growth in the right half plane 
$\{\Re s \geq 1\}$. Therefore
using Carlson's Theorem we conclude that $h$ is identically $0$, 
thus $f(s)=g(s)$ for all values $s$ in this half plane, hence in $\CC$.
\end{proof}

We have proved that the vector space generated by Omega functions can be characterized 
by the functional equation (\ref{eq:functional_eq}) and the 
growth property (\ref{eq:estimate}). This generalizes to Omega functions Wielandt's characterization for Euler 
Gamma function (1939, \cite{Wie},  \cite{Rem1}, \cite{Rem2}). 

\bigskip

We also observe that Omega functions provide the general solutions of the functional equation (\ref{eq:functional_eq})
with estimates (\ref{eq:estimate}) since given such a functional equation with coefficients $(\alpha_l)$
we can build the coefficients $a_l=-l^{-1}\alpha_l$, then the polynomial $P_0$ and the Omega functions 
$(\Omega_k)_{0\leq k\leq d-1}$ that form a basis for the space of solutions.

\bigskip

It is also easy to see that we can replace the estimate (\ref{eq:estimate}) by an estimate of the form, for $s\in S(1,b)$,
$$
|f(s)| \leq C e^{-c \Im s}
$$
with $2\pi n\leq c < 2\pi (n+1)$ for an integer $n\in \ZZ$. Then the space of solutions 
is also finite dimensional as the map
$f(s)\mapsto e^{-2\pi i n s} f(s)$ provides an isomorphism of the space of solutions with $\mathbb V$.

\bigskip

The structure of the space of solutions is interesting. The space of holomorphic solutions is a subspace of 
codimension $1$.

\begin{proposition}
The subspace of holomorphic solutions in $\mathbb V$ is a subspace of codimension $1$ generated by the entire functions
of order $1$,
$$
\Omega_{l}(s)-\Omega_0(s) =\int_{\g_{0l}} t^{s-1} e^{P_0(t)}\, dt  \ .
$$
\end{proposition}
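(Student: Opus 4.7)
The plan is to combine three facts already established in the excerpt: (i) by the main theorem, $\mathbb{V}$ is $d$-dimensional with basis $(\Omega_k)_{0\le k\le d-1}$; (ii) by Theorem~\ref{thm:residues}, the residue of $\Omega_k$ at $s=-n$ equals $\lambda_n$, which crucially does not depend on $k$; (iii) the observation made right after Theorem~\ref{thm:residues} that $\Omega_k-\Omega_l$ is an entire function, with the integral representation $\int_{\g_{lk}} t^{s-1}e^{P_0(t)}\,dt$ over a contour joining the two directions at infinity with zero winding number around $0$.

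First I would write any element $f\in \mathbb{V}$ uniquely as $f=\sum_{k=0}^{d-1} c_k\,\Omega_k$. By (ii),
\[
\Res_{s=-n} f \;=\; \lambda_n\,\sum_{k=0}^{d-1} c_k .
\]
Since $\lambda_0 = e^{P_0(0)} = 1 \neq 0$, and since by Theorem~\ref{thm:functional_eq} every $\Omega_k$ is holomorphic off the non-positive integers with simple poles there, $f$ is entire if and only if the residue at $s=0$ vanishes, which is equivalent to the single linear condition
\[
\sum_{k=0}^{d-1} c_k = 0 .
\]
Note that this one equation kills all potential residues at once, precisely because the numbers $\lambda_n$ factor out uniformly.

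Hence the holomorphic subspace of $\mathbb{V}$ is the hyperplane $\{(c_0,\dots,c_{d-1}) : \sum c_k = 0\}$, of dimension $d-1$. The $d-1$ functions $\Omega_l-\Omega_0$ for $l=1,\dots,d-1$ lie in this hyperplane; they are linearly independent because the corresponding coefficient vectors in the basis $(\Omega_0,\dots,\Omega_{d-1})$ are linearly independent; and their integral representation over $\g_{0l}$ is exactly the one recalled in (iii), so they are entire. Thus they form a basis, as claimed.

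There is no real obstacle here: the content of the proposition is packaged inside earlier results, and the whole proof is a half-page of bookkeeping. The only conceptual point to highlight is the $k$-independence of the residues $\lambda_n$, which is what collapses $d$ would-be conditions (one per pole) into the single condition $\sum c_k=0$.
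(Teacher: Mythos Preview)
Your proof is correct and follows essentially the same route as the paper: the paper's one-line argument simply cites that the $\Omega_l-\Omega_0$ are entire and linearly independent (giving $\dim\ge d-1$), leaving implicit that $\Omega_0\notin$ holomorphic subspace since $\Res_{s=0}\Omega_0=1$, hence the dimension is exactly $d-1$. Your version spells out this upper bound more carefully via the residue condition $\sum_k c_k=0$, which is a welcome clarification but not a different idea.
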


\begin{proof}[\textbf{Proof.}]
As observed before, the functions  $\Omega_{l}(s)-\Omega_0(s)$ are entire functions 
of order $1$ and are linearly independent. 
\end{proof}

\bigskip

Some of the results in \cite{BPM3} can be generalized. In particular the Integrability criterion and Abel-like Theorem 
(Theorems 4.2  and 4.3). This will be studied in a separate article. K. Biswas has extended results 
from \cite{BPM3} to curves of higher genus \cite{B}. It is interesting to speculate on the extension of the results 
for Omega functions in higher genus.

\bigskip

\textbf{Acknowledgements.}

We thank J.-P. Ramis for pointing out the earlier appearance of Omega fonctions under the name 
of ``modified Gamma functions'' in the theory of asymptotic approximations, and to the classical 
literature on difference equations, in particular for references on the study of solutions 
with controlled vertical growth. We thank A. Aycock for the reference to Euler's method to solve 
this type of difference equations.
We are grateful to K. Biswas for corrections and interesting 
comments related to our previous  related work. We finally also thank the referee for his comments and corrections
that greatly improved the presentation.


\end{document}